\newtheorem{theorem}{Theorem}[section]
\newtheorem{prop}[theorem]{Proposition}
\newtheorem{lemma}[theorem]{Lemma}
\newtheorem{cor}[theorem]{Corollary}
\theoremstyle{definition}
\newtheorem{remark}[theorem]{Remark}
\numberwithin{equation}{section}
\newcommand{\PP}{\mathbb{P}}
\newcommand{\NN}{\mathbb{N}}
\newcommand{\ZZ}{\mathbb{Z}}
\newcommand{\QQ}{\mathbb{Q}}
\newcommand{\RR}{\mathbb{R}}
\newcommand{\CC}{\mathbb{C}}
\newcommand{\FF}{\mathbb{F}}
\newcommand{\GG}{\mathbb{G}}
\newcommand{\AAA}{\mathbb{A}}
\newcommand{\calO}{\mathcal{O}}
\newcommand{\Adele}{\mathbf{A}}
\newcommand{\dirichlet}{\varphi}
\newcommand{\ringofintegers}{\mathcal{O}_K}
\newcommand{\ringofintegersw}{\mathcal{O}_\omega}
\newcommand{\ringofintegerst}{\mathcal{O}_t}
\newcommand{\ringofintegerstinv}{\mathcal{O}_{t^{-1}}}
\newcommand{\absvalue}[1]{\left\lvert #1 \right\rvert}
\newcommand{\vol}{\mathrm{vol}}
\renewcommand{\O}{\mathcal{O}}
\newcommand{\one}{\mathbbm{1}}
\DeclareMathOperator{\Real}{Re}
\DeclareMathOperator{\Image}{Im}
\DeclareMathOperator{\Br}{Br}
\DeclareMathOperator{\inv}{inv}
\DeclareMathOperator{\Res}{Res}
\title{Rational points in a family of conics over $\FF_2(t)$}
\newcommand{\dan}[1]{{\color{blue} \sf $\clubsuit\clubsuit\clubsuit$ Dan: [#1]}}
\author{Daniel Loughran}
\address{
Department of Mathematical Sciences \\
University of Bath \\
Claverton Down \
Bath\\ 
BA2 7AY\\
UK.}
\author{Judith Ortmann}
\address{Institut f\"ur Algebra, Zahlentheorie und Diskrete Mathematik, Leibniz Universit\"at Hannover, Welfengarten 1, 30167 Hannover, Germany}
\subjclass[2010]
{14G05 (primary), 
14F22 
(secondary).}
\begin{document}
	
\begin{abstract}
	Serre famously showed that almost all plane conics over $\QQ$ have no rational point.
	We investigate versions of this over global function fields,
	focusing on a specific family of conics over $\FF_2(t)$ which illustrates new behaviour.
	We obtain an asymptotic formula using harmonic analysis, which requires a Tauberian
	theorem over function fields for Dirichlet series with branch point singularities.
\end{abstract}

\setcounter{tocdepth}{1}
\maketitle
	
{\hypersetup{hidelinks}
	\tableofcontents}

	\thispagestyle{empty}
\section{Introduction}
Consider a morphism  $\pi:V \to \PP^n$ of smooth projective varieties over a  global field $K$ with geometrically integral generic fibre. We view this as a family of varieties parametrised by the fibres $V_x = \pi^{-1}(x)$.  A natural question is how many of the varieties in the family have a rational point. This is encapsulated via the counting function
	\[  \#\lbrace x\in \PP^n(K) : H(x) \leq B, ~ x\in\pi(V(K))\rbrace,\]
where $H$ is the usual naive height on projective space.
Serre \cite{serre_90} was amongst the first to consider such problems over $\QQ$. He phrased his results in terms of Brauer group elements, but was motivated by families of conics having a rational point. Serre proved  upper bounds in this case and asked whether his bounds were sharp. This question has attracted considerable interest and spawned many further research directions; see for instance \cite{bright_browning_loughran,browning_dietmann,guo_95,hooley_93,hooley_07,loughran_18,poonen_voloch}. 
In particular Loughran and Smeets \cite{loughran_smeets} have generalized the work of Serre from conics to other families of varieties and stated a conjecture on the asymptotic behaviour of the counting function.
	
In this article, we consider the same kind of problems of Serre and Loughran--Smeets but over global function fields $K$. We study whether the conjecture of Loughran and Smeets also holds in the global function field case. 
However, it turns out that there is new phenomena over global function fields, and we focus on an explicit example which demonstrates this.
	
\subsection{Main result}
Consider the global function field $K=\FF_2(t)$. 
Let $\Omega_K$ denote the set of places of $K$. 
Consider the following conic bundle surface
\[C\colon\,\, x_0^2 + x_0x_1 + yx_1^2 = tx_2^2\quad\subseteq \AAA^1_K\times\PP^2_{K}.\]
For fixed $y\in \AAA^1_K$, we denote by $C_y$ the conic given by the fibre of the morphism $\pi\colon C\to\AAA^1_K$ at the point $y$.
We are interested in the counting function
\begin{equation} \label{def:N(B)}
	N(\AAA^1_K,\pi,B) = \#\lbrace y\in K : H(y) = B, ~ C_y(K) \neq \emptyset\rbrace.
\end{equation} 
Here $H\colon \AAA^1_K\rightarrow\RR_{>0}$ is the usual naive height given by
\[ H(y) = \prod_{\omega\in\Omega_K} H_\omega(y) = \prod_{\omega\in\Omega_K} \max\lbrace 1,\absvalue{y}_\omega\rbrace.\]
Due to the height taking restricted values over function fields, we ask for equality of the height with $B$, rather than inequality. We consider this counting function as $B \to \infty$, where $B$ is restricted to lie in the set $\lbrace 2^M: M\in \NN_{\ge0}\rbrace$.  Our main result is as follows.
	
\begin{theorem}\label{main_theorem_rtl_points}
	\[N(\AAA^1_K,\pi,B) = \frac{cB^{2}}{(\log B)^{1/2}} + O\left(\frac{B^2}{(\log B)^{3/2}}\right),\]
	where
	$$c =4 \cdot \left(\frac{2(\log 2)}{\pi}\right)^{1/2}\prod_{\omega \in \Omega_K}\left(1-\frac{1}{2^{\deg \omega}}\right)^{1/2} c_\omega,$$
	with $c_\omega = \frac{1}{2}\left(\frac{1-2^{-2}}{1-2^{-1}}\right) = 3/4$ for $\omega \in \{t,t^{-1}\}$
and
$$c_\omega = 1 +  \frac{1}{2}
	\left(\frac{1}{2^{\deg\omega}}\left(1 + 
	\frac{1-2^{-\deg\omega}}{2^{\deg \omega}-1}\right) 
	- \frac{1}{2^{2\deg\omega -1}} \right),
$$
 for $\omega \notin \{t,t^{-1}\}$.
\end{theorem}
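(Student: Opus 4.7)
The plan is to pass through three stages: local reduction via Hasse--Minkowski, packaging into a height generating series with a branch-point singularity, and extraction of the asymptotic via the function-field Tauberian theorem promised in the abstract.

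First, since $C_y$ is a smooth conic, the Hasse principle gives $C_y(K)\neq\emptyset$ iff $C_y(K_\omega)\neq\emptyset$ for every $\omega\in\Omega_K$. In characteristic $2$, the conic corresponds to the Artin--Schreier-type quaternion algebra $[y,t)$, and local solubility at $\omega$ is the vanishing of $\inv_\omega[y,t)\in\tfrac12\ZZ/\ZZ$. I would compute the local density $c_\omega$ of the set $S_\omega\subseteq K_\omega$ of $y$ with vanishing invariant by a case analysis on $v_\omega(y)$ and on the Artin--Schreier residue of $y$ modulo $\omega$. At $\omega\notin\{t,t^{-1}\}$, the scalar $t$ is a unit and the local condition reduces to a standard Artin--Schreier splitting condition in the unramified quadratic extension of $K_\omega$, producing the generic factor. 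At $\omega\in\{t,t^{-1}\}$, the uniformiser coincides up to units with the right-hand coefficient $t$, which forces an extra parity constraint on $v_\omega(y)$ and produces the exceptional factor $3/4$.

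With local indicators in hand, one writes
\[
N(\AAA^1_K,\pi,B) \;=\; \sum_{\substack{y\in K \\ H(y)=B}}\prod_{\omega}\one_{S_\omega}(y)
\]
and passes to the height generating series $Z(u)=\sum_{M\ge 0}N(\AAA^1_K,\pi,2^M)\,u^M$. Either via adelic Poisson summation along $K\subset\Adele_K$ or via direct factorisation by $\omega$-conductor, $Z(u)$ assumes the form of an Euler product over $\Omega_K$ of local generating polynomials encoding the densities $c_\omega$. Because the Brauer invariant is $\ZZ/2$-valued at a density-one set of places, the Euler product produces a branch-point singularity of exponent $1/2$ at the abscissa of convergence,
\[
Z(u)\;=\;\frac{F(u)}{(1-4u)^{1/2}}\quad\text{near }u=\tfrac14,
\]
with $F$ holomorphic and non-vanishing at $u=1/4$, and with $F(1/4)$ equal to the product of the $c_\omega$ together with the zeta-type factor $\prod_\omega(1-2^{-\deg\omega})^{1/2}$ and the $\Gamma(1/2)$-constant $2(2\log 2/\pi)^{1/2}$. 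Applying the function-field Tauberian theorem for Dirichlet series with branch-point singularities to $Z(u)$ near $u=1/4$ converts this singularity into
\[
N(\AAA^1_K,\pi,2^M) \;=\; \frac{c\cdot 4^M}{M^{1/2}}\;+\;O\!\left(\frac{4^M}{M^{3/2}}\right),
\]
which, after substituting $B=2^M$ and $\log B = M\log 2$, yields the stated formula with the constant $c$ as in the theorem.

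The principal obstacle is the Tauberian step: classical Tauberian theorems for power series with a boundary branch-point (Flajolet--Odlyzko transfer, Selberg--Delange) do not transfer automatically to the function-field setting, where $Z$ lives over $\FF_2[t]$ and its secondary singularities on the circle $\lvert u\rvert = 1/4$ are controlled by $L$-functions of function-field Artin--Schreier characters; obtaining the sharper error $O(B^2/(\log B)^{3/2})$ requires verifying that no such secondary singularities contribute. A secondary but delicate point is the explicit local computation at $\omega\in\{t,t^{-1}\}$ recovering the value $c_\omega=3/4$, which must track the interplay between the uniformiser and the right-hand coefficient carefully.
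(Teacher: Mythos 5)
Your high-level plan matches the paper's strategy: Hasse principle reduces to local solubility encoded by Serre's characteristic-$2$ symbol $[y,t)_\omega$, compute local densities, package into a generating series with a branch-point singularity, and apply a function-field Tauberian theorem. However there is a concrete gap at the heart of the packaging step, and it would cause your constant to come out wrong by a factor of $2$.

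You write that ``either via adelic Poisson summation along $K\subset\Adele_K$ or via direct factorisation by $\omega$-conductor, $Z(u)$ assumes the form of an Euler product over $\Omega_K$.'' This is not correct. The sum defining $Z$ ranges over $y\in K$, not over all restricted products of local data, so the indicator $\prod_\omega f_\omega(y)$ does not split the sum into an Euler product: different $y\in K$ with the same valuation at a fixed $\omega$ can have different residues, and the $f_\omega$ genuinely depend on the residue. Direct factorisation therefore fails. Adelic Poisson summation does work, but it produces a \emph{sum of Euler products}
\[
Z(s)\;=\;\sum_{\psi}\widehat{fH}(s;\psi),
\]
over all automorphic additive characters $\psi$, not a single Euler product. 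The crucial piece of the paper's argument (Lemmas \ref{lemma_global_Fourier_vanishes_nontrivial_a_neq_t_inv} and \ref{lemma:global_Fourier_tranform_vanishes_nontrivial_and_neq_infty}) is that $f$ and $H$ are invariant under a certain compact open subgroup $\calO_C$, so that $\widehat{fH}(s;\psi)=0$ unless $\psi$ is trivial on $\calO_C$, and the only two such automorphic characters are $\one$ and the special character $\psi_{1/t}=\prod_\omega\exp(\pi i[y_\omega,t)_\omega)$ built from Serre's symbols, which happens to be indistinguishable from $\one$ on the support of $f$. Hence $Z(s)=2\,\widehat{fH}(s;\one)$. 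That factor of $2$ is exactly the leading $2$ in the constant $c$. Your description of $F(1/4)$ lists the zeta-factor, the $\Gamma(1/2)$ constant and the $c_\omega$, but gives no mechanism for the factor $2$; as written the argument would produce $c/2$.

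Two secondary remarks. First, you characterise the condition at $\omega\in\{t,t^{-1}\}$ as a parity constraint on $v_\omega(y)$; the actual condition (Lemmas \ref{lemma:points_on_conic_place_t} and \ref{lemma:points_on_conic_place_t_inv}) is that the coefficient $a_0$ of $t^0$ in the Laurent expansion of $y$ vanishes, which is a residue condition, not a valuation parity condition. Second, and more significantly, the local computation at $\omega\notin\{t,t^{-1}\}$ is not ``a standard Artin--Schreier splitting condition'' one can read off locally: the wild ramification of $x^2-x-\omega^{-n}$ means a direct local computation of $[\omega^{-n},t)_\omega$ is hard, and the paper instead invokes global reciprocity \eqref{eq:reciprocity_condition_on_sum_symbols}, computing the symbol at all other places and subtracting (Lemma \ref{lemma:points_on_conic_omega_negative}). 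The paper flags this as the most delicate step; it, together with Lemma \ref{lemma:measure_set_conic_with_points}, is what makes the half-density appear at every place simultaneously, which is the new phenomenon compared with the number-field story, and your sketch does not explain why it holds.
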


The Euler product in Theorem \ref{main_theorem_rtl_points} is easily seen to be absolutely convergent. 	
 As $\PP^1$ has approximately $B^2$ points of height equal to $B$ \cite[Thm.~3.11]{pey12}, \Cref{main_theorem_rtl_points} shows that $0\%$ of the varieties in the family $C$ contain a rational point.	We detect whether a conic in the family has a rational point using the \emph{Hasse principle}; this says that a conic has a $K$-point if and only if it has a $K_\omega$-point for all $\omega\in\Omega_K$.  To count we use harmonic analysis applied to the height zeta function
\begin{equation}\label{def_height_zeta_function}
	Z(s) = \sum_{\substack{y\in K\\ C_y(K)\neq \emptyset}}H(y)^{-s}, \quad \Real(s) > 2.
\end{equation}
We apply Poisson
summation to write this as a sum over additive adelic characters.
Usually, one aims for a meromorphic continuation on some half-plane with a unique pole of largest real value. However in our case there is a \textit{branch point} singularity, rather than a pole. As such to obtain the asymptotic formula we require a Tauberian theorem for branch point singularities. Over $\QQ$ such a Tauberian theorem has been obtained by Delange \cite{Del54}. We use a Tauberian theorem over function fields for this setting (Theorem \ref{Tauberian_Theorem}).
	
The harmonic analysis approach was one of the first successful attempts for proving Manin's conjecture \cite{fmt89}. Our method relies on harmonic analysis on the additive group $\GG_a$, which was first applied to Manin's conjecture over number fields in \cite{batyrev1998toric,chambert-loir_tschinkel_2002,chambert-loir_tschinkel_2000}, building on the case of toric varieties \cite{batyrev_tschinkel_95,batyrev_tschinkel_96,batyrev1998toric}

\subsection{Comparison with results over number fields}\label{section:comparison of results}
We compare Theorem \ref{main_theorem_rtl_points} with the known results in the literature by Serre \cite{serre_90}, and Loughran and Smeets \cite{loughran_smeets} over number fields, and point out new phenomena arising over global function fields.

\subsubsection{Specialisation of Brauer group elements}
We first compare our situation with that of Serre \cite{serre_90}. Recall that there is a correspondence between conics and quaternion algebras over a field. This correspondence even applies over fields of characteristic $2$, providing one interprets quaternion algebras correctly. Let $k$ be a field of characteristic $2$. Recall from Artin--Schreier theory that we have an isomorphism $H^1(k,\ZZ/2\ZZ) \cong k/\wp(k)$ where $\wp: a \mapsto a^2 - a$ is the Artin--Schreier map. Here $a \in k$ corresponds to the quadratic extension given by $x^2 - x - a$. Moreover by Kummer theory we have $H^1(k,\mu_2) = k^\times/k^{\times 2}$ for fppf cohomology. Therefore via the cup product we obtain a well-defined Brauer group element
\begin{equation} \label{eqn:alpha}
	\alpha = y \cup t \in \Br \AAA^1_K.
\end{equation}
This is the Brauer group element relevant to our situation, namely for $y \in K$ we have $\alpha(y) = 0 \in \Br K$ if and only if $C_y(K) \neq \emptyset$ (see Proposition \ref{equivalence_rtl_points_local_symbol}). Note that the existence of such a Brauer group element is special to positive characteristic, since $\mathbb{A}^1$ has constant Brauer group in characteristic $0$ \cite[Thm.~6.1.1]{Brauer}.

In \cite{serre_90} Serre obtains upper bounds over $\QQ$ for the number of rational points of bounded height in projective space for which a given Brauer group element specialises to $0$. His upper bound is phrased in terms of the \textit{residue} of the Brauer group element along codimension $1$ points.
However the residue is only defined \cite[Def~1.4.11]{Brauer} for so-called \emph{tame} Brauer group elements. Our element $\alpha$ is not tame, therefore the residue is not defined in this case, so the framework of Serre does not apply.

\subsubsection{Family of varieties}
We next consider the framework from \cite[Thm.~1.2]{loughran_smeets}. Let $V$ be a proper, smooth algebraic variety over a number field $K$ equipped with a dominant morphism $\pi: V\to\PP^n_K$ with geometrically integral fibre. Let  
	\[N_{\mathrm{loc}}(\PP^n_K,\pi,B) = \#\lbrace x\in\PP^n_K(K): x\in \pi(V(\Adele_K)),~ H(x)\le B\rbrace\]
be the number of varieties in the family that are everywhere locally solvable, where $H$ is the usual height on $\PP^n_K$, and $\Adele_K$ denotes the adeles of $K$. Then \cite[Thm.~1.2]{loughran_smeets}  says that
\begin{equation} \label{eqn:LS_upper_bound}
	N_{\mathrm{loc}}(\PP^n_K,\pi,B) \ll \frac{B^{n+1}}{(\log B)^{\Delta(\pi)}},
\end{equation}
where $\Delta(\pi) = \sum_{D\in(\PP^n_K)^{(1)}}(1-\delta_D(\pi))$ with
\[\delta_D(\pi) = \frac{\#\{\gamma\in\Gamma_D(\pi): \gamma~\text{acts with a fixed point on}~I_D(\pi)\}}{\#\Gamma_D(\pi)}.\]
Here $I_D(\pi)$ is the set of geometrically irreducible components of multiplicity one of the fibre of $\pi$ over $D$, and $\Gamma_D(\pi)$ is a finite group through which the action of $\mathrm{Gal}(\overline{\kappa(D)}/\kappa(D))$ on $I_D(\pi)$ factors with $\kappa(D)$  the residue field of the codimension one point $D$.
	
Moreover \cite[Conj.~1.6]{loughran_smeets} conjectures that \eqref{eqn:LS_upper_bound} is sharp when one assumes that at least one smooth fibre of $\pi$ is everywhere locally solvable and that the fibre of $\pi$ over every codimension one point of $\PP^n_K$ has an irreducible component of multiplicity one.
	
To compare these results with our situation, we first have to construct a proper model $\widetilde{C}$ of our surface $C$. Then, when we replace $H(x)\le B$ by $H(x) = B$, we can identify $N_{\mathrm{loc}}(\PP^1_K,\widetilde{\pi},B)$ with $N(\PP^1_K,\widetilde{\pi},B)$,	where we consider the map $\widetilde{\pi}: \widetilde{C}\to\PP^1_K$. Thus, by looking at our main result \Cref{main_theorem_rtl_points}, we would expect $\Delta(\widetilde{\pi}) = 1/2$.
To construct $\widetilde{C}$, we consider the conic bundle surface 
\[C'\colon\,\, X_0^2 + YX_0X_1 + YX_1^2 = tX_2^2\]
for $((X_0:X_1:X_2),Y)\in\PP^2_K\times\AAA^1_K$. 
Then, we obtain $\widetilde{C}$ by glueing together $C$ and $C'$ via $x_0 = X_0, x_1 = YX_1,x_2 = X_2, y = Y^{-1}$ on $\PP^2\times\PP^1\setminus\{\infty,0\}$.
	
\begin{lemma}\label{lemma_properties_proper_model} \hfill
	\begin{enumerate}
		\item The fibre of $\widetilde{\pi}$ over any point of $\AAA_K^1$ is smooth.
		\item The fibre of $\widetilde{\pi}$ over the point at infinity is integral but not geometrically reduced.	
		\item $\widetilde{C}$ is a regular proper model of $C$.
		\item $C$ admits no smooth proper model over $K$.
	\end{enumerate}
\end{lemma}
\begin{proof}
	Both (1) and (2) follow from a simple calculation. For (3), from (1) we know that $C$ is smooth over $\AAA_K^1$, hence smooth over $K$. Thus it suffices to consider the points 
	which lie over the fibre at infinity. The Jacobian matrix of $C'$ is
	\begin{equation*}
		(YX_1, YX_0, 0,X_1(X_0+X_1)).
	\end{equation*}
	Thus $C'$ is smooth everywhere apart from at the closed points
	$$P_1: Y = X_1 = 0, \quad X_0^2 = tX_2^2, \quad \quad P_2: Y=0, \quad X_0 = X_1, \quad X_0^2 = tX_2^2,$$
	of degree two, where it is not smooth. 
	From the equation it is clear that $X_1$ and $Y$ are a regular system of parameters
	for $P_1$ and that $X_0 + X_1$ and $Y$ are a regular system of parameters for $P_2$.
	Thus these points are regular, as required.
	
	For (4) assume that $C$ admits a smooth proper model $S$ over $K$.
	Up to performing birational transformations, we may assume that 
	$S$ is a relatively minimal
	conic bundle surface over $K$. By (1) this has at most one singular
	fibre, which lies over the point at infinity. Thus we obtain a Brauer--Severi scheme
	over $\mathbb{A}^1_K$, hence	a Brauer group element $\alpha \in \Br \mathbb{A}^1_K$.
	By \cite[Prop.~7, 8]{Coombes}, the conic bundle surface $S$ admits a section
	over the separable closure of $K$. It follows that the Brauer group element
	$\alpha$ is algebraic, i.e.~$\alpha \in \Br_1 \mathbb{A}^1_K$.
	However $\Br_1 \mathbb{A}^1_K = \Br K$ \cite[Prop.~5.4.2]{Brauer},
	thus $\alpha$ is constant, which is a contradiction (the fact that $\alpha$ is non-constant follows,
	for example, from Lemma \ref{lemma:measure_set_conic_with_points} below).
\end{proof}
	
By Lemma \ref{lemma_properties_proper_model} our proper model $\widetilde{C}$ is not smooth, and there is no smooth proper model. Hence, the framework of \cite{loughran_smeets} does not apply in this situation. Moreover, we cannot compute $\Delta(\widetilde{\pi})$ as the definition of $\delta_D(\widetilde{\pi})$ makes no sense for the fibre over the point at infinity. This is reduced of multiplicity $1$, however after the purely inseparable field extension $K(\sqrt{t})$ it becomes non-reduced. Given this, one is tempted to try to apply the framework from \cite{BLS}, regarding fibrations with multiple fibres. However \cite[Conj.~1.1]{BLS} predicts that the power of $B$ is smaller, which is not at all the case in our setting.
	
Non-reduced fibres cannot occur for smooth conic bundle surfaces: this follows for example from \cite[Prop.~7, 8]{Coombes} which implies that there is a section over the separable closure, and any section must meet each fibre in smooth point \cite[Cor.~10.1.9]{Brauer}. Nonetheless, for a smooth conic bundle surface with a non-split fibre over a single point $P$ we have $\delta_P(\widetilde{\pi}) = 1/2$. In our case we have a single non-split fibre. Therefore naively applying this formula in our situation we obtain $\Delta(\widetilde{\pi})=1/2$, which agrees with \Cref{main_theorem_rtl_points}.

However this factor of $1/2$ arises in a completely different way in our setting. Over number fields this comes from the fact that $1/2$ of all primes split in a separable quadratic extension. In our setting all places $\omega$ are relevant, but the measure of soluble fibres which reduce to the fibre at infinity moduli $\omega$ is $1/2$ the total measure. In fact calculating this measure is the most delicate part of the paper (this is Lemmas \ref{lemma:points_on_conic_omega_negative} and \ref{lemma:measure_set_conic_with_points}).
This difference can be interpreted as stemming from the fact over global fields of characteristic not equal to $2$, local solubility of conics can be encoded via the Hilbert symbol. However in characteristic $2$ a different symbol needs to be used which has very different formal properties. We use this symbol, which was introduced by Serre in \cite[Ch.~XIV \S5]{serre79}.  This symbol is closely related to the Brauer group element \eqref{eqn:alpha}.

The leading constant $c$ in Theorem \ref{main_theorem_rtl_points} can be interpreted via the conjectural constant given in \cite[Conj.~3.8]{loughran_rome_sofos}, despite as already explained the problem not lying in the exact framework. For example the factor $4$ should be interpreted as a cardinality of some subordinate Brauer group times a normalising factor from our choice of Haar measures. We have $\sqrt{\pi} = \Gamma(1/2)$. The factors $\left(1- 2^{-\deg \omega}\right)^{1/2}$ are convergence factors from $\zeta_K(s-1)^{1/2}$ and the local factors $c_\omega$ arise in our proof as special values of local Fourier transforms, thus can be interpreted as local densities.

\subsection{Structure of the paper}
	In \Cref{section:Tauberian_theorem}, we prove a Tauberian Theorem (\Cref{Tauberian_Theorem}) for a Dirichlet series that may have a branch point singularity.

	In Section \ref{sec:local_solubility} we study in detail local solubility of the conics in our family.
	With the use of class field theory, we can give an equivalence between existence of rational points and local symbols introduced by Serre (see \Cref{equivalence_rtl_points_local_symbol}). This 
	makes it possible to give a description of the $y$ giving $C_y(K_\omega)\neq\emptyset$ (Lemma \ref{lemma:points_on_conic_nonnegative_valuation}, 
	\ref{lemma:points_on_conic_place_t}, and \ref{lemma:points_on_conic_place_t_inv}).
	Obtaining a result for the fibres which reduce to the point at infinity modulo some $\omega\in\Omega_K$ is surprisingly indirect. 
	
	In \Cref{sec:harmonic_analysis} we compute and analyse the analytic behaviour of the height zeta function $Z(s)$ via harmonic analysis.
	We show that the Fourier transform vanishes for all non-trivial additive characters except for 
	a special character arising from Serre's 
	local symbols (\Cref{lemma:global_Fourier_tranform_vanishes_nontrivial_and_neq_infty}). It turns out that the Fourier transform at this special character is indistinguishable from the one at the trivial character on the set of interest, that is, on the image of the map $C(\Adele_K)\to \AAA^1_K(\Adele_K)$. This results in the fact that the height zeta function is given by twice the contribution from the Fourier transform at the trivial character. 

	We conclude the proof by applying the Tauberian \Cref{Tauberian_Theorem} to our height zeta function to obtain the asymptotic formula.

%
	
	
\subsection*{Acknowledgements}
JO was partially supported by the Caroline Herschel Programme of Leibniz University Hannover, as well as a scholarship for a research stay abroad of the Graduiertenakademie of Leibniz University Hannover at the University of Bath. DL was supported by UKRI Future Leaders Fellowship \texttt{MR/V021362/1}. We thank Philip Dittmann for help with the proof of \Cref{lemma:points_on_conic_omega_negative} and Abdulmuhsin Alfaraj for helpful comments. We thank Ofir Gorodetsky for useful correspondence on Tauberian theorems over function fields, and the referees for an extremely careful reading of the paper and useful comments.

\section{A Tauberian theorem}
\label{section:Tauberian_theorem}

\subsection{Statement}
We recall the basic properties of the zeta function of $\FF_q(t)$ from \cite[Thm.~5.9]{Rosen}.

\begin{lemma}\label{lemma:value_zeta_function}
	We have
	\[ \zeta_{\FF_q(t)}(s) := \prod_{\omega\in\Omega_{\FF_q(t)}} \left(1-q^{-s\deg \omega}\right)^{-1} 
	=\frac{1}{(1-q^{1-s})(1-q^{-s})}, \quad \Real(s) > 1. \]
	The zeta function $\zeta_{\FF_q(t)}(s)$ has a meromorphic continuation to $\Real(s) > 0$ 
	with simple poles at $s=1 + 2 \pi i n/\log q$ for $n \in \ZZ$.
\end{lemma}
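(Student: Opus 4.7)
The plan is to compute the Euler product directly using the concrete description of the places of $\FF_q(t)$, then read off the analytic properties from the resulting closed-form expression. The places split into the finite places, which correspond bijectively to the monic irreducible polynomials $p \in \FF_q[t]$ (with $\deg \omega_p = \deg p$), and the single place at infinity $\omega_\infty$ (with $\deg \omega_\infty = 1$). Isolating the infinite factor gives
\[ \zeta_{\FF_q(t)}(s) = (1 - q^{-s})^{-1} \prod_{p \text{ monic irred}} (1 - q^{-s\deg p})^{-1}. \]

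For the finite Euler product I would use unique factorization in $\FF_q[t]$ to expand it as the Dirichlet-style sum
\[ \prod_{p} (1 - q^{-s\deg p})^{-1} = \sum_{f \text{ monic}} q^{-s \deg f} = \sum_{n \geq 0} q^{n} q^{-sn} = \frac{1}{1 - q^{1-s}}, \]
where the middle equality uses the elementary count that there are exactly $q^n$ monic polynomials of degree $n$ in $\FF_q[t]$. The geometric series converges absolutely for $\Real(s) > 1$, justifying the rearrangement and the convergence of the Euler product in that region. Combining with the infinite factor yields the claimed closed form.

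For the meromorphic continuation and pole structure, the key observation is that the right-hand side is a rational function of $q^{-s}$, and hence is meromorphic on all of $\CC$. The zeros of its denominator are the simple zeros of $1 - q^{1-s}$, located at $s = 1 + 2\pi i n/\log q$, and the simple zeros of $1 - q^{-s}$, located at $s = 2\pi i n/\log q$, for $n \in \ZZ$. Restricting to $\Real(s) > 0$ discards the latter family, leaving exactly the simple poles described in the statement. There is no substantive obstacle here; the only subtlety is to treat the infinite place separately and to correctly deploy the count $\#\{f \in \FF_q[t] : f \text{ monic}, \deg f = n\} = q^n$, which is what produces the shift from $q^{-s}$ to $q^{1-s}$ in the denominator and ultimately places the main pole at $s=1$.
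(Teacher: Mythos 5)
Your proof is correct, and it is the standard textbook argument. The paper itself does not prove this lemma at all: it simply cites it as \cite[Thm.~5.9]{Rosen}, so there is no in-paper proof to compare against. The computation you supply (isolating the infinite place, expanding the finite Euler product via unique factorisation in $\FF_q[t]$, and invoking the count of $q^n$ monic polynomials of degree $n$ to obtain the geometric series $\sum_{n \ge 0} q^{(1-s)n}$) is precisely the argument given in Rosen, and the identification of the pole set from the explicit rational function of $q^{-s}$ is equally routine. One could quibble that the order of operations (convergence of the geometric series justifying the rearrangement of the Euler product) is best phrased by first establishing absolute convergence of $\sum_f q^{-s\deg f}$ for $\Real(s)>1$ and then concluding the Euler product identity, but your statement already gestures at this and nothing substantive is missing.
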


Let $\varphi:\NN \to \CC$ be an arithmetic function. Consider the Dirichlet series
\begin{equation}\label{Dirichlet_series_F}
	F(s) = \sum_{M=0}^\infty \frac{\dirichlet(q^M)}{q^{Ms}}
\end{equation}
associated to $\dirichlet$ for $s\in\CC$, providing it exists.
Given the periodic nature of $F$, we work in the following region.
\begin{equation}\label{def_set_C}
 D= \left\{ s\in\CC: -\frac{\pi i}{\log q} \le \Image(s) <  \frac{\pi i}{\log q}\right\}.
\end{equation}
Given Lemma \ref{lemma:value_zeta_function}, our model for a branch singularity of  order $b$ at $s = a$ will be $(1 - q^{a-s})^{-b}$.

\begin{theorem}\label{Tauberian_Theorem}
	Let $a\in\RR$ be such that $F$ converges absolutely for $\Real(s) > a$. Assume that 
	there is $b> 0$ and a function $\widetilde{F}$ that is holomorphic in a neighbourhood of 
	the line
	$\lbrace s \in D : \Real(s) = a \rbrace$ such that
	\[F(s) = \frac{1}{\left(1-q^{a-s}\right)^b}\widetilde{F}(s), \quad \text{ for all }s \in D.\]
	Then
	\[\dirichlet(q^M) = \frac{\widetilde{F}(a)}{\Gamma(b)}q^{aM}M^{b-1} + 
	O\left(q^{aM}M^{b-2}\right), \quad \text{as } M \to \infty.\]
\end{theorem}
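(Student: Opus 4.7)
The plan is to reduce the statement to classical singularity analysis for a power series via the substitution $z = q^{-s}$, and then apply a Flajolet--Odlyzko-type Hankel contour argument.

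Writing $z = q^{-s}$ and $\rho = q^{-a}$, the Dirichlet series becomes a power series $f(z) = \sum_{M=0}^\infty \dirichlet(q^M) z^M$ convergent on $|z| < \rho$. The strip $D$ corresponds bijectively to $\CC^\times$ under $s \mapsto q^{-s}$, and the vertical line $\Real(s) = a$ corresponds to the circle $|z| = \rho$. The hypothesis thus yields a function $g$ (the transport of $\widetilde{F}$), holomorphic in an annulus $\rho(1-\eta) < |z| < \rho(1+\eta)$ for some $\eta > 0$, with $f(z) = g(z)(1 - z/\rho)^{-b}$ and $g(\rho) = \widetilde{F}(a)$. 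Starting from Cauchy's formula $\dirichlet(q^M) = (2\pi i)^{-1}\oint_{|z|=r} f(z) z^{-M-1}\,dz$ (valid for $r < \rho$), I would deform the contour past $|z| = \rho$, placing a branch cut for $(1 - z/\rho)^{-b}$ along the radial segment $[\rho, R]$ with $R = \rho(1 + \eta/2)$. The deformed contour consists of the circle $|z| = R$ together with a keyhole wrapping the branch cut.

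The integral over $|z|=R$ is bounded by $O(R^{-M}) = O(q^{(a - \eta/2)M})$, which is exponentially smaller than the target main term $q^{aM} M^{b-1}$. For the keyhole, I would apply the change of variable $w = M(1 - z/\rho)$, turning it into a truncated Hankel contour $H_M$ around $w = 0$ of diameter $O(M)$; up to an overall factor $M^{b-1} q^{aM}$, the integrand becomes $g(\rho(1-w/M))(1-w/M)^{-M-1} w^{-b}$. Using the Taylor expansion $g(\rho(1-w/M)) = g(\rho) + O(w/M)$ (from the holomorphy of $g$ at $\rho$) and the uniform estimate $(1-w/M)^{-M-1} = e^w\bigl(1 + O\bigl((|w| + |w|^2)/M\bigr)\bigr)$ on bounded parts of the contour, together with the extension of $H_M$ to the full infinite Hankel contour at an exponentially small cost (since $|e^w|$ decays on the tails), the leading behaviour is extracted via the classical identity
\[
\frac{1}{2\pi i}\int_{H} e^w w^{-b}\,dw = \frac{1}{\Gamma(b)}.
\]
This produces the stated main term $\widetilde{F}(a) q^{aM} M^{b-1}/\Gamma(b)$ with relative error $O(1/M)$.

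The main technical obstacle is the uniform control of the error terms on the Hankel contour: one must check that the truncated tails, the small arc near $w = 0$, and the two approximations above all contribute only $O(M^{b-2})$ after integration, producing the desired relative error $O(1/M)$. These estimates follow the standard Flajolet--Odlyzko transfer method, and the result should be viewed as the function-field analogue of Delange's \cite{Del54} Tauberian theorem for branch point singularities, with the periodicity inherent in $q^{-s}$ being absorbed exactly by the restriction to the fundamental strip $D$.
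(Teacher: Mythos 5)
Your proposal is correct and follows essentially the same approach as the paper: the substitution $u=q^{-s}$, Cauchy's formula on a small circle, deformation past $|u|=q^{-a}$ to an outer circle plus a contour wrapping the branch cut, and the rescaling $w=M(1-q^a u)$ to reduce to a Hankel integral. The only meaningful difference is in the Hankel step: the paper invokes a lemma of Porritt which evaluates $\frac{1}{2\pi i}\int_{\mathcal{H}}w^{-b}(1-w/M)^{-(M+1)}\,\mathrm{d}w$ directly on the truncated clockwise contour with a uniform $O(1/M)$ error, whereas you re-derive this by approximating $(1-w/M)^{-M-1}=e^w(1+O((|w|+|w|^2)/M))$, extending to the full infinite Hankel contour at exponentially small cost, and applying the classical Hankel representation of $1/\Gamma(b)$. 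Both routes yield the same main term and $O(M^{b-2})$ error; the paper's choice simply packages your explicit estimate into a quotable lemma.
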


Numerous Tauberian theorems for power series with poles or branch point singularities can be found in the literature, which can be applied to Dirichlet series over function fields after a change of variables (e.g.~\cite[Thm.~11.3]{Odl95}). We struggled to find the exact statement in the literature we needed which moreover had a complete proof. As such for completeness we provide our own proof, which follows the classical method.

\subsection{Hankel contours}
Our approach is based upon \cite[Thm.~17.1]{Rosen} and \cite[Prop.~1.3]{datta23}. 
The following is \cite[Lemma 1]{porritt2020}, though we have a different sign  as we are taking a different orientation for the contour.
	
\begin{lemma} \label{lemma_Hankel_contour}
	Let $A,M,\delta>0$. Let $\mathcal{H}$ be the Hankel contour of radius 1 around 0 going along the negative real axis to $-M\delta$ in clockwise direction (see \Cref{fig:Hankel_countour_lemma}). Then, uniformly for $\absvalue{b}\le A$, we have
	\begin{equation*}
		\frac{1}{2\pi i}\int_{\mathcal{H}}w^{-b}\frac{\mathrm{d}w}{\left(1-\frac{w}{M}\right)^{M+1}} = -\frac{1}{\Gamma(b)} + O_{A,\delta}\left(\frac{1}{M}\right).
	\end{equation*}
\end{lemma}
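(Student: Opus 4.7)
The strategy is to reduce the integral on the left-hand side to Hankel's classical integral representation
$$\frac{1}{\Gamma(b)} = \frac{1}{2\pi i}\int_{\mathcal{H}_\infty} e^w w^{-b}\,dw,$$
where $\mathcal{H}_\infty$ is the infinite Hankel contour of radius $1$ around $0$ traversed counterclockwise. The clockwise orientation of $\mathcal{H}$ explains the minus sign in the claim, while the truncation at $-M\delta$ and the replacement of $(1-w/M)^{-(M+1)}$ by its limit $e^w$ each contribute the $O(M^{-1})$ error. Throughout, the uniformity in $|b| \le A$ comes from $|w^{-b}| \ll_A |w|^{-\Real(b)} \ll_A |w|^A$ (with bounded $\arg w$ on the contour).

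The main step is to compare $(1-w/M)^{-(M+1)}$ with $e^w$ on $\mathcal{H}$. I would split $\mathcal{H} = \mathcal{H}_1 \cup \mathcal{H}_2$, where $\mathcal{H}_1$ consists of the circular arc of radius $1$ together with the portions of the two straight segments with $|w| \le \sqrt{M}$. On $\mathcal{H}_1$ we have $|w/M| \le 1/\sqrt{M}$, so the Taylor expansion
$$-(M+1)\log\!\left(1-\frac{w}{M}\right) - w \;=\; \frac{w}{M} + \frac{(M+1)w^2}{2M^2} + O\!\left(\frac{|w|^3}{M^2}\right)$$
is uniformly $O(|w|/M + |w|^2/M)$, and exponentiating gives
$$\left(1-\frac{w}{M}\right)^{-(M+1)} = e^w\left(1 + O\!\left(\frac{|w|+|w|^2}{M}\right)\right).$$
Multiplying by $w^{-b}$ and integrating over $\mathcal{H}_1$, the contribution of the error term is dominated by $M^{-1}\int_{\mathcal{H}_1} |w|^{A+2}\, e^{\Real(w)}\,|dw|$, which is $O_{A}(M^{-1})$ since $\int_1^\infty x^{A+2} e^{-x}\,dx < \infty$.

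On $\mathcal{H}_2$ (the straight-line portions with $\sqrt{M}\le x \le M\delta$, where $w = -x \pm i 0$), a Taylor approximation is useless because $|w|/M$ is not small near the endpoint. Instead, I would use the elementary inequality $\log(1+u) \ge u/(1+u)$ for $u \ge 0$, which yields
$$\left(1+\frac{x}{M}\right)^{-(M+1)} \le \exp\!\left(-\frac{(M+1)x}{M+x}\right) \le e^{-x/(1+\delta)}$$
for $0\le x \le M\delta$. Hence both $\int_{\mathcal{H}_2} w^{-b}(1-w/M)^{-(M+1)}\,dw$ and $\int_{\mathcal{H}_2} w^{-b} e^w\,dw$ are bounded by $\int_{\sqrt M}^{\infty} x^{A} e^{-x/(1+\delta)}\,dx$, which is exponentially small in $\sqrt{M}$ and absorbed into $O_{A,\delta}(M^{-1})$.

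Finally, the integral $\frac{1}{2\pi i}\int_{\mathcal{H}_1 \cup \mathcal{H}_2} w^{-b} e^w\,dw$ differs from $\frac{1}{2\pi i}\int_{\mathcal{H}_\infty} w^{-b} e^w\,dw$ only by two tails in $(-\infty,-M\delta]$, whose contribution is $O(e^{-M\delta})$. By Hankel's formula, the infinite contour integral taken in the reverse (clockwise) orientation equals $-1/\Gamma(b)$, giving the claim. The principal obstacle is the tail estimate on $\mathcal{H}_2$: the decay of $(1-w/M)^{-(M+1)}$ only matches $e^{w}$ up to the factor $1/(1+\delta)$ in the exponent, but this is still enough for rapid decay since $|w|$ is already of size at least $\sqrt M$ there.
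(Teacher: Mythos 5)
Your proof is correct, but note that the paper does not actually prove this lemma: it is quoted from the cited source (Porritt, Lemma~1), with only the sign adjusted for the clockwise orientation of $\mathcal{H}$, so there is no internal proof to compare against. Your argument supplies exactly the standard route behind that citation: split the contour at $|w|=\sqrt{M}$, replace $\left(1-\frac{w}{M}\right)^{-(M+1)}$ by $e^{w}$ near the origin with relative error $O\bigl((|w|+|w|^2)/M\bigr)$ (the exponent discrepancy is bounded there, so exponentiation is legitimate), control the far segments via $\log(1+u)\ge u/(1+u)$, and conclude with Hankel's loop integral for $1/\Gamma(b)$, the reversed orientation producing the minus sign. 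The error bookkeeping is right: the Taylor-range error integrates to $O_A(1/M)$ against $|w|^{A+2}e^{\Real w}$, while the truncated tails beyond $\sqrt{M}$ and beyond $M\delta$ are exponentially small and absorbed into $O_{A,\delta}(1/M)$. Two cosmetic remarks only: the splitting implicitly assumes $\sqrt{M}\le M\delta$, i.e.\ $M\ge\delta^{-2}$, with smaller $M$ absorbed into the implied constant; and for non-integer $M$ one should fix the principal branch of $\left(1-\frac{w}{M}\right)^{M+1}$, which is harmless since $\Real\left(1-\frac{w}{M}\right)>0$ on the contour (and in the paper's application $M\in\NN$ anyway).
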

	
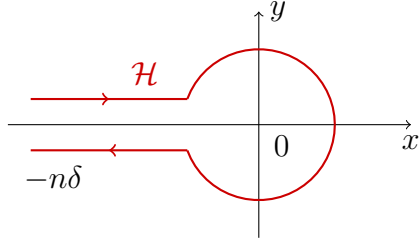
\begin{figure}[htb]
	\centering
	\begin{tikzpicture}
		\draw[->] (-3.3,0) -- (2,0);
		\draw[->] (0,-1.5) -- (0,1.5);
		\draw (2,0) node [below] {$x$};
		\draw (0,1.5) node [right] {$y$};
		\draw (0.3,0) node [below] {$0$};
		\draw (-2.7,-0.4) node [below] {$-M\delta$};
		\draw (-1.5,0.4) node [above, red!80!black] {$\mathcal{H}$};
		\draw [red!80!black, thick, domain=0:160] plot ({cos(\x)}, {sin(\x)});
		\draw [red!80!black, thick, domain=200:360] plot ({cos(\x)}, {sin(\x)});
		\draw [red!80!black, thick, decoration={
		markings,
			mark=at position 0.5 with {\arrow{>}}}, postaction={decorate}] (-3,0.34) -- (-0.94,0.34);
		\draw [red!80!black, thick, decoration={
				markings,
		mark=at position 0.5 with {\arrow{>}}}, postaction={decorate}] (-0.94,-0.34) -- (-3,-0.34);
	\end{tikzpicture}
	\caption{The Hankel contour $\mathcal{H}$ appearing in \Cref{lemma_Hankel_contour}}
\label{fig:Hankel_countour_lemma}
\end{figure}
	
By performing the variable change $u = q^{-s}$, the region $D$ transforms to $\CC^\times$ and the region $\Real(s) \geq a$ transforms to the punctured disk $\{ u \in \CC^\times : |u| \leq q^{-a}\}$. We obtain the power series
\begin{equation}
	F(u) = \sum_{M=0}^\infty \dirichlet(q^M)u^M,  \quad |u| < q^{-a}.
\end{equation}
A priori $F(u)$ is only holomorphic for $u \neq 0$, however holomorphicity at $u = 0$ is obtained from a simple application of the dominated convergence theorem and absolute convergence.

\begin{figure}[ht]
	\begin{tikzpicture}[scale=0.85]
		\draw[->] (-4.5,0) -- (4.5,0);
		\draw[->] (0,-4.5) -- (0,4.5);
		\draw [black, dashed] (0,0) circle (2);
		\draw [green!50!black, thick] (0,0) circle (1);
		\draw [->, black,thick,domain=2.5:110, samples=200,smooth] plot ({4*cos(\x)}, {4*sin(\x)});
		\draw [->, black,thick,domain=110:250, samples=200,smooth] plot ({4*cos(\x)}, {4*sin(\x)});
		\draw [black,thick,domain=250:357.5, samples=200,smooth] plot ({4*cos(\x)}, {4*sin(\x)});
		\draw [red!80!black, thick, domain=20:340] plot ({0.5*cos(\x)+2}, {0.5*sin(\x)});
		\draw [red!80!black, thick, decoration={
			markings,
			mark=at position 0.5 with {\arrow{>}}}, postaction={decorate}] (2.47,0.17) -- (4,0.17);
		\draw [red!80!black, thick, decoration={
			markings,
			mark=at position 0.5 with {\arrow{>}}}, postaction={decorate}] (4,-0.17) -- (2.47,-0.17);
		\draw (0,1.3) node [right, green!50!black] {$\lvert u\rvert = r $};				\draw (0,2.3) node [right, black] {$\lvert u\rvert = q^{-a}$};				\draw (0,4.3) node [right, black] {$\mathcal{C}: \lvert u\rvert = q^{-a\eta}$};
		\draw (3,0.3) node [red!80!black, above] {$\mathcal{H}'$};
		\draw (5,0) node [below] {$x$};
		\draw (0,5) node [right] {$y$};
	\end{tikzpicture}
	\caption{The contours in the proof of \Cref{main_theorem_rtl_points}}
	\label{fig:proof_thm}
\end{figure}
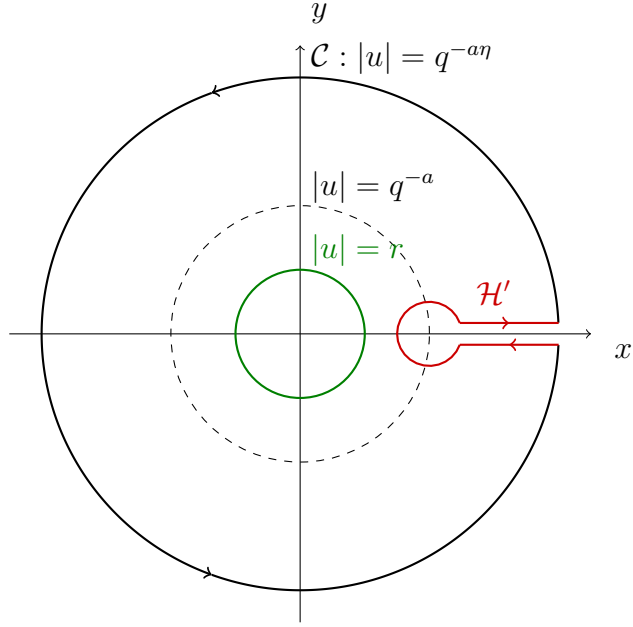

Let $C_r$ be a small circle around the origin of radius $r < q^{-a}$ (small green circle in \Cref{fig:proof_thm}), oriented anticlockwise. By applying Cauchy's integral formula, we obtain
\begin{equation}\label{N(B)_as_integral_1}
	\dirichlet(q^M) = \frac{1}{2\pi i}\int_{C_r}F(u)\frac{\mathrm{d}u}{u^{M+1}}.
\end{equation}
As $\{u\in \CC: \absvalue{u} \leq q^{-a}\}$ is compact and $\widetilde{F}$ is holomorphic on this region by assumption, there is $0 < \eta < 1 $ such that $F(u)$ is holomorphic on $\lbrace u\in \CC: \absvalue{u} \le q^{-a\eta}\rbrace$ except for a branch cut at $q^{-a}$. We take $\eta$ to be sufficiently close to $1$. Let $\mathcal{H}'$ be the contour as shown in red in \Cref{fig:proof_thm} that consists of a circle of radius $q^{-a}/M$ oriented in a clockwise direction around $q^{-a}$, with  two line segments parallel to the ray from 0 to $q^{-a}$ joining this small circle to the bigger black incomplete circle  $\mathcal{C}$ along $|u| = q^{-a\eta}$.

\noindent We apply Cauchy's integral formula to the  arrangement in \Cref{fig:proof_thm} to obtain 
\begin{equation}\label{N(B)_Cauchy_appliled}
	\dirichlet(q^M) = \frac{1}{2\pi i}\int_{\mathcal{H}'}F(u)\frac{\mathrm{d}u}{u^{M+1}} + \frac{1}{2\pi i} \int_{\mathcal{C}}F(u)\frac{\mathrm{d}u}{u^{M+1}} =: I_M + I_E.
\end{equation}

	
	
\subsection{Computation of the error and main term} 
\begin{lemma}\label{lemma_I_E}
	$I_E \ll_{q,\eta,b} q^{a\eta M}.$
\end{lemma}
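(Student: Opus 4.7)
The plan is to apply the standard ML-inequality to the contour integral defining $I_E$. After the change of variable $u = q^{-s}$, the contour $\mathcal{C}$ lies on the circle $|u| = q^{-a\eta}$, so its length is at most $2\pi q^{-a\eta}$ and $|u|^{-(M+1)} = q^{a\eta(M+1)}$ throughout. It therefore suffices to prove that $|F(u)|$ is uniformly bounded on $\mathcal{C}$ by a constant depending only on $q$, $\eta$, $b$ (and implicitly on $\widetilde{F}$).

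For the boundedness of $F$ on $\mathcal{C}$, I would fix a single-valued branch of $(1-q^{a}u)^{-b}$ by making a branch cut along the ray emanating from $u=q^{-a}$ outward along the positive real axis; this is precisely the ray lying in the gap between the two horizontal segments of the contour $\mathcal{H}'$. With this choice, $F(u) = \widetilde{F}(u)/(1-q^{a}u)^{b}$ is holomorphic on a neighbourhood of the closed slit disk $\{|u|\leq q^{-a\eta}\}$ minus the branch cut. Since $\eta<1$ we have $q^{-a\eta}>q^{-a}$, so the branch point $u=q^{-a}$ lies strictly inside the disk, and by construction $\mathcal{C}$ stays at positive distance from the cut. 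Hence $\mathcal{C}$ is a compact subset of the domain of holomorphy of $F$, so $|F|$ attains a finite maximum $M_F$ on $\mathcal{C}$, and $M_F$ depends only on $q$, $\eta$, $b$, $\widetilde{F}$.

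Combining these observations yields
\[
|I_E| \,\leq\, \frac{1}{2\pi}\cdot 2\pi q^{-a\eta}\cdot M_F \cdot q^{a\eta(M+1)} \,=\, M_F\cdot q^{a\eta M},
\]
which is precisely the bound claimed. The proof is essentially routine; the only point requiring care is to check that the branch cut chosen for $(1-q^{a}u)^{-b}$ is consistent with the geometry of the contour $\mathcal{H}'$ (so that the decomposition in \eqref{N(B)_Cauchy_appliled} truly holds and $F$ is genuinely holomorphic on a neighbourhood of $\mathcal{C}$), but this is built into the construction of the contour. No genuine obstacle arises.
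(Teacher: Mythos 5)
Your proposal is correct and takes essentially the same route as the paper: bound $|u|^{-(M+1)}$ by $q^{a\eta(M+1)}$ on the arc $\mathcal{C}$, bound the remaining factor of the integrand, and use the length of the contour. The paper isolates $|\widetilde{F}|$ and then shows the resulting integral $\int|1-q^au|^{-b}\,\mathrm{d}u$ over $\mathcal{C}$ is $O_{q,\eta,b}(1)$ by a change of variables to a fixed circle; you instead bound $\sup_{\mathcal{C}}|F|$ and invoke the ML-inequality. These are the same estimate packaged slightly differently.

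One phrase in your argument should be tightened: you say ``by construction $\mathcal{C}$ stays at positive distance from the cut.'' That is true for each fixed $M$, but the gap in $\mathcal{C}$ near the positive real axis has width comparable to $q^{-a}/M$, so this distance shrinks as $M\to\infty$ and the contour $\mathcal{C}$ itself changes with $M$. Compactness of $\mathcal{C}$ alone therefore does not yet justify the claim that $M_F$ is \emph{uniform} in $M$. The clean fix, which is essentially what the paper does implicitly, is to observe that every point of $\mathcal{C}$ lies on the fixed full circle $|u|=q^{-a\eta}$, on which $|1-q^au|\geq q^{a(1-\eta)}-1>0$ (so $|F(u)|=|\widetilde{F}(u)|/|1-q^au|^b$ is bounded by a constant depending only on $q,\eta,b$ and $\widetilde{F}$, regardless of any branch choice since $|z^b|=|z|^b$). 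What matters is the distance from $\mathcal{C}$ to the branch \emph{point} $q^{-a}$, which is indeed bounded below by $q^{-a\eta}-q^{-a}$ uniformly in $M$, not the distance to the cut. With that correction the argument is complete.
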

\begin{proof}
	As $\widetilde{F}(u)$ is holomorphic in a compact neighbourhood of $\mathcal{C}$, it is 
	bounded in this domain. We obtain
	\begin{align*}
		|I_E| \leq \int_{\mathcal{C}}\frac{|\widetilde{F}(u)|\mathrm{d}u}{(1 - q^{a}u)^b|u|^{M+1}}
		\ll_{q,\eta,b} q^{a\eta (M+1)}
		\int_{\mathcal{C}}\frac{\mathrm{d}u}{|1 - q^{a}u|^b}.
	\end{align*}
	We now make the change of variables $u = q^{-a\eta} e^{2 \pi i \theta}$ to obtain
	$$|I_E| \ll_{q,\eta,b} q^{a\eta M} 
	\int	_{0}^{1} \frac{\mathrm{d}\theta}{|1 - q^{a(1-\eta)}e^{2 \pi i \theta}|^b}.$$
	The denominator is non-zero and the domain of integration is compact,
	hence this integral can be bounded in terms of $q,\eta$ and $b$.
\end{proof}
	
\begin{lemma}\label{lemma_I_M}
	The main term is given by
	\[I_M = 	\frac{\widetilde{F}(a)}{\Gamma(b)}q^{aM}M^{b-1} + O(q^{aM}M^{b-2}).\]
\end{lemma}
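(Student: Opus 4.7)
The plan is to transform the contour integral $I_M$ over $\mathcal{H}'$ into the Hankel integral of Lemma \ref{lemma_Hankel_contour} via a change of variables, then extract the leading term by a Taylor expansion of $\widetilde{F}$ at $u=q^{-a}$.

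First I would use holomorphy of $\widetilde{F}$ on a neighbourhood of the closed disc $\{|u|\le q^{-a\eta}\}$ minus the branch cut to deform $\mathcal{H}'$ so that its line segments sit just above and below the real interval $(q^{-a},q^{-a\eta})$; by Cauchy this does not change $I_M$. Then I would substitute $u = q^{-a}(1 - w/M)$. Since $dw/du = -Mq^a$ is a nonzero real scalar, this map is orientation-preserving and sends $\mathcal{H}'$ onto the Hankel contour $\mathcal{H}$ of Lemma \ref{lemma_Hankel_contour} with $\delta = q^{a(1-\eta)}-1>0$. Using $1-q^a u = w/M$ and $\frac{du}{u^{M+1}} = \frac{-q^{aM}\, dw}{M(1-w/M)^{M+1}}$, this transforms the integral into
\[ I_M = -\frac{q^{aM}M^{b-1}}{2\pi i}\int_{\mathcal{H}} \frac{\widetilde{F}(q^{-a}(1-w/M))}{w^b(1-w/M)^{M+1}}\, dw. \]

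Next I would write $\widetilde{F}(q^{-a}(1-w/M)) = \widetilde{F}(a) + R(w)$. The constant piece becomes exactly the Hankel integral of Lemma \ref{lemma_Hankel_contour}, contributing
\[ -\widetilde{F}(a)\, q^{aM}M^{b-1}\left(-\frac{1}{\Gamma(b)} + O\!\left(\tfrac{1}{M}\right)\right) = \frac{\widetilde{F}(a)}{\Gamma(b)}\, q^{aM}M^{b-1} + O(q^{aM}M^{b-2}), \]
which is the claimed main term with an acceptable error. For the remainder, writing $\widetilde{F}(u) - \widetilde{F}(q^{-a}) = (u-q^{-a}) g(u)$ with $g$ holomorphic (hence bounded) on the compact region containing the contour yields $|R(w)| \ll |w|/M$ uniformly on $\mathcal{H}$. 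Its contribution is therefore bounded by
\[ q^{aM}M^{b-2}\int_{\mathcal{H}} \frac{|dw|}{|w|^{b-1}|1-w/M|^{M+1}}. \]
I would conclude by showing this final integral is $O(1)$ uniformly in $M$: on the unit circle the integrand is trivially bounded, while on the line segments, where $|w| = r \in [1, M\delta]$, the inequality $(1 + r/M)^{M+1} \gg e^{cr}$ for some constant $c>0$ depending on $\delta$ makes the integrand decay exponentially, and $\int_1^{\infty} r^{1-b} e^{-cr}\, dr$ converges for every $b>0$.

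The main obstacle will be bookkeeping: choosing the correct value of $\delta$, checking that the change of variables is orientation-preserving so that $\mathcal{H}'$ maps onto (rather than against) the contour $\mathcal{H}$ of Lemma \ref{lemma_Hankel_contour}, and tracking the minus signs so that the $-1/\Gamma(b)$ supplied by that lemma combines with the Jacobian $-q^{aM}/M$ to produce the positive coefficient $\widetilde{F}(a)/\Gamma(b)$ in the leading term. Once these are in hand the remainder is a routine Hankel-contour estimate.
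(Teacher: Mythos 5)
Your proof is correct and follows essentially the same strategy as the paper: Taylor expand $\widetilde F$ at $q^{-a}$, perform the change of variables $u = q^{-a}(1-w/M)$ to transform $\mathcal{H}'$ into the Hankel contour $\mathcal{H}$, and extract the main term from \Cref{lemma_Hankel_contour}. The one point of divergence is the remainder: the paper uses the identity $\frac{u-q^{-a}}{(1-q^au)^b} = \frac{-q^{-a}}{(1-q^au)^{b-1}}$ and re-applies the Hankel estimate with $b$ replaced by $b-1$, whereas you bound $|R(w)| \ll |w|/M$ and estimate $\int_{\mathcal{H}} |w|^{1-b}|1-w/M|^{-(M+1)}\,|dw| = O(1)$ directly via the exponential bound $(1+r/M)^{M+1} \gg e^{cr}$; both routes give the stated $O(q^{aM}M^{b-2})$, and your more explicit bound also quietly corrects the paper's sign in $\delta$, which should indeed be $q^{a(1-\eta)}-1>0$.
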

\begin{proof}
	As $\widetilde{F}(u)$ is analytic near $q^{-a}$, we can write $\widetilde{F}(u)$ as a Taylor series at $q^{-a}$
	\begin{equation*}
		\widetilde{F}(u) = \widetilde{F}(a) + a_1(u-q^{-a}) + a_2(u-q^{-a})^2 + \cdots,
	\end{equation*}
	where $a_i\ll1$ for all $i\ge 1$. Then, providing $\eta$ is taken sufficiently close to $1$, we get
	\begin{equation}\label{eq_I_M}
	I_M = \frac{1}{2\pi i}\widetilde{F}(a)\int_{\mathcal{H}'}\frac{1}{(1-q^{a}u)^{b}}\frac{\mathrm{d}u}{u^{M+1}} + O_\eta\left( \int_{\mathcal{H}'} \frac{u-q^{-a}}{(1-q^{a}u)^{b}}\frac{\mathrm{d}u}{u^{M+1}} \right).
	\end{equation}
	For the main term in \eqref{eq_I_M}, recall that the small red circle in $\mathcal{H}'$ has radius $q^{-a}/M$. Thus this can be parametrised by $u=q^{-a} + q^{-a}/M e^{-it}$ with $t\in[\varepsilon,2\pi - \varepsilon]$ and $\varepsilon>0$ small enough. Thus, we perform the change of variables 
	\[u = q^{-a}\left(1-\frac{w}{M}\right) \quad\text{or equivalently,}\quad w = M(1-q^au).\]
	This gives $\mathrm{d}u = -q^{-a}\mathrm{d}w/M$. Moreover for the horizontal rays,
	if $\absvalue{u}$ goes up to $q^{-a\eta}$, then $w$ goes down to $-M\delta$ with 
	$\delta = 1 - q^{a(1- \eta)}$.
	Thus the change of variables transforms $\mathcal{H}'$ into $\mathcal{H}$
	from Lemma \ref{lemma_Hankel_contour}. We obtain
	\begin{align*}
		\frac{\widetilde{F}(a)}{2\pi i}\int_{{\mathcal{H}}'}\frac{1}{(1-q^{a}u)^{b}}\frac{\mathrm{d}u}{u^{M+1}}
		&= \frac{\widetilde{F}(a)}{2\pi i}\int_{\mathcal{H}} \frac{1}{\left(\frac{w}{M}\right)^{b}} 
		\frac{-q^{-a}\mathrm{d}w/M}{q^{-a(M+1)}\left(1-\frac{w}{M}\right)^{M+1}}\\
		&= -\frac{\widetilde{F}(a)q^{aM}M^{b-1}}{2\pi i}
		\int_{\mathcal{H}} w^{-b} \frac{\mathrm{d}w}{\left(1-\frac{w}{M}\right)^{M+1}}\\
		&= \widetilde{F}(a)q^{aM}M^{b-1}\left(\frac{1}{\Gamma(b)} 
		+ O_\eta\left(\frac{1}{M}\right)\right),
	\end{align*}
	where the final step is \Cref{lemma_Hankel_contour}.
	For the error term in \eqref{eq_I_M}, we note that 
	$$\frac{u-q^{-a}}{(1-q^{a}u)^{b}} = \frac{-q^{-a}}{(1-q^{a}u)^{b-1}}.$$
	Thus by the above this contributes $O(q^{a(M)}M^{b-2}/q^{a})$ to $I_M$, as required.
\end{proof}
	
\Cref{Tauberian_Theorem} now follows immediately from \Cref{lemma_I_E,lemma_I_M}. \qed

\section{Local solubility} \label{sec:local_solubility}
\subsection{Preliminaries} \label{sec:preliminaries}
We recall some basic facts about the field $K= \FF_q(t)$.

Let $\ringofintegers = \FF_q[t]$ denote the ring of integers of $K$. Let $\Omega_K$ denote the set of places of $K$. Each place $\omega$ either corresponds to a monic, irreducible polynomial in $\ringofintegers$  or to the \emph{place at infinity} $t^{-1}$. We identify $\omega$ with the corresponding element of $K$. The completion $K_\omega$ of $K$ at $\omega$ can therefore be viewed as a power series field over $\FF_{q^{\deg \omega}}$, where by convention we take $\deg t^{-1} = 1$. We denote by $\mathcal{O}_\omega$ the ring of integers of $K_{\omega}$. We choose the unique Haar measure of $K_\omega$ such that $\mathcal{O}_\omega$ has volume $1$. For all $k \in \ZZ$ we have
\begin{equation} \label{eqn:mu_valuation}
		\mu_\omega\left(\{y\in K_\omega: v_\omega(y) = k\}\right) =  q^{-k\deg\omega}\left(1-q^{-\deg\omega}\right).
\end{equation}
Recall that over local fields of characteristic not equal to $2$, one can detect solubility using the Hilbert symbol. Serre has defined an analogous symbol in characteristic $2$, which we now recall from  \cite[XIV §5]{serre79}. From now on we take $K = \FF_2(t)$. Let $\Br K$ denote the Brauer group of $K$. We have the following exact sequence
\[0\longrightarrow\Br K\longrightarrow \bigoplus_\omega\Br K_\omega\overset{}{\longrightarrow} \QQ/\ZZ\longrightarrow 0,\]
where the last map is $(b_\omega)_\omega \mapsto  \sum_\omega \inv_{\omega} b_\omega$, due to \cite[p.~163, Example h)]{serre79}. Here $\inv_{\omega}\colon\Br K_\omega\rightarrow \QQ/\ZZ$ is the local invariant, which is an isomorphism.
	For $a\in K, b\in K^\times$, let $[a,b)\in\Br K$ be the local symbol defined in \cite[Ch.~XIV, \S5]{serre79}. We note that we can also replace $K$ by $K_\omega$ in this definition for any place $\omega$. For $a\in K_\omega,\  b\in K_\omega^\times$, we define \[[a,b)_\omega = 2\cdot\inv_{\omega}[a,b)\in\ZZ/2\ZZ.\] 
This is an element of $\ZZ/2\ZZ$, as the element $\inv_{\omega}[a,b)$ lies in $(1/2)\ZZ/\ZZ$.
By exactness of the above sequence, for $a\in K,b\in K^\times$ we obtain
\begin{equation}\label{eq:reciprocity_condition_on_sum_symbols}
	\sum_\omega [a,b)_\omega = 0.
\end{equation}
By \cite[Ch.~XIV, \S5 Prop.~14; Ch.~V, \S2 Prop.~3]{serre79} 
this has the following properties. 

\begin{lemma}\label{prop_Serre}
	Let $\omega$ be a place of $K$, and let $a,a'\in K_\omega$, $b,b'\in K_\omega^\times$. 
	\begin{enumerate}
		\item We have $[a,b)_\omega = 0$ if and only if $b$ is a norm in the Artin--Schreier extension $K_\omega(\alpha)/K_\omega$, where $\alpha^2-\alpha = a$,
		\item $[a+a',b)_\omega = [a,b)_\omega+[a',b)_\omega$,
		\item $[a,bb')_\omega = [a,b)_\omega+[a,b')_\omega$.
	\end{enumerate}
	Let $M_\omega$ be an unramified extension of $K_\omega$.
	\begin{enumerate}
		\item[(4)] The map $N_{M_\omega/K_\omega}:\calO_{M_\omega}^\times\to\calO_{K_\omega}^\times$ is surjective. Hence, units in $\calO_{K_\omega}$ are norms in the extension $M_\omega/K_\omega$.
	\end{enumerate}
\end{lemma}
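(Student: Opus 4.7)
All four statements are recorded as propositions in Serre~\cite{serre79}; my plan is to verify that the conventions match and then invoke those results, so I outline the underlying mechanisms rather than giving full derivations.

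For (1), I would unfold the definition of the local symbol from \cite[Ch.~XIV, \S5]{serre79}. Artin--Schreier theory gives an isomorphism $K_\omega/\wp(K_\omega) \xrightarrow{\sim} H^1(K_\omega, \ZZ/2\ZZ)$, sending $a$ to the character $\chi_a$ cutting out the extension $K_\omega(\alpha)/K_\omega$ with $\alpha^2-\alpha = a$. Interpreted via the cyclic algebra construction, $[a,b) \in \Br K_\omega$ represents the cyclic algebra attached to $K_\omega(\alpha)/K_\omega$ and $b$. By the standard theorem on cyclic algebras (Hilbert~90 combined with local class field theory), this class is trivial in $\Br K_\omega$ if and only if $b$ lies in $N_{K_\omega(\alpha)/K_\omega} K_\omega(\alpha)^\times$, which is precisely \cite[Ch.~XIV, \S5 Prop.~14]{serre79}.

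For (2) and (3), bilinearity is formal: the Artin--Schreier map $a \mapsto \chi_a$ is additive, the Kummer-type map $b \mapsto (b) \in H^1(K_\omega, \mu_2)$ (fppf) is multiplicative, and the cup product $H^1 \times H^1 \to H^2$ is $\ZZ$-bilinear. Alternatively one can read these identities directly from the cyclic algebra presentation in Serre.

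For (4), I would invoke \cite[Ch.~V, \S2 Prop.~3]{serre79}. The key inputs are: the residue-field extension of an unramified $M_\omega/K_\omega$ is a separable extension of finite fields, whose norm and trace are both surjective; moreover a uniformiser $\pi$ of $K_\omega$ is also a uniformiser of $M_\omega$, so the norm preserves the unit filtration. On the quotient $\mathcal{O}_{M_\omega}^\times/(1+\pi\mathcal{O}_{M_\omega})$ the norm coincides with the residue-field norm, and on each successive piece $(1+\pi^n\mathcal{O}_{M_\omega})/(1+\pi^{n+1}\mathcal{O}_{M_\omega})$ the expansion $N(1+\pi^n x) \equiv 1 + \pi^n \mathrm{Tr}(x) \pmod{\pi^{n+1}}$ identifies the norm with the residue-field trace. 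A successive-approximation argument using completeness of $\ringofintegersw$ then yields surjectivity of $N_{M_\omega/K_\omega}$ on units.

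There is no genuine mathematical obstacle here beyond aligning conventions: the statements are all proved in the cited references, and the real work is checking that the normalisation $[a,b)_\omega = 2\cdot\inv_\omega[a,b)$ is compatible with the sign conventions in \cite{serre79} so that the bilinearity identities come out with the correct signs in $\ZZ/2\ZZ$.
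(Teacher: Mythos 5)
Your proposal matches the paper's treatment: the paper also proves this lemma purely by citation to \cite[Ch.~XIV, \S5 Prop.~14; Ch.~V, \S2 Prop.~3]{serre79}, and you identify exactly those two references. Your added commentary on the cyclic-algebra mechanism, bilinearity of cup products, and the filtration/trace argument for unramified norm surjectivity is accurate background for why Serre's results apply, but is not a different route.
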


In Lemma \ref{prop_Serre}, if the polynomial $\alpha ^2 - \alpha - a$ is reducible, then we take the convention of viewing the Artin--Schreier extension as a quadratic \'etale algebra.

\subsection{Application of local symbols}
We now prepare for the proof of \Cref{main_theorem_rtl_points}. Recall that our family is given by
\begin{equation} \label{eqn:conic_equation}
C\colon\,\, x_0^2 + x_0x_1 + yx_1^2 = tx_2^2\quad\subseteq \AAA^1_K\times\PP^2_{K}.
\end{equation}
Recall from Lemma \ref{lemma_properties_proper_model} that our family has a singular fibre 
over the point at infinity and no other singular fibres.
This singular fibre controls the arithmetic of the family.
We begin with an analysis of when a fibre has a $K_\omega$-point.		
We define the Artin--Schreier extension 
\begin{equation*}
	L = K(y)[a]/(a^2-a-y)\supseteq K(y).
\end{equation*}
The norm form of this extension is given by 
\begin{equation}\label{norm_form_Artin_Schreier}
		N_{L/K}(x_0+x_1a) 	= x_0^2-x_0x_1-x_1^2y,
\end{equation}
where $x_0,x_1 \in K$. We denote by $L_\omega = L \otimes_{K(y)} K_\omega(y)$ for a place $\omega$. 
	
\begin{prop}\label{equivalence_rtl_points_local_symbol}
	Let $\omega\in\Omega_K$ and $y\in K_\omega$. We have $C_y(K_\omega)\neq \emptyset$ if and only if $[y,t)_\omega = 0$, which is equivalent to $t$ being a norm from the Artin--Schreier extension $L_\omega$.
\end{prop}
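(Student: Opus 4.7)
The second equivalence, between $[y,t)_\omega = 0$ and $t \in N_{L_\omega/K_\omega}(L_\omega^\times)$, is exactly Lemma \ref{prop_Serre}(1) applied to $a=y$ and $b=t$, so the real content of the proposition is the equivalence between $C_y(K_\omega) \neq \emptyset$ and $t$ being a norm from $L_\omega$. My plan is to rewrite the defining equation of $C_y$ in terms of the norm form of $L_\omega$ and then read off the equivalence directly.

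The key observation is that in characteristic $2$, equation \eqref{norm_form_Artin_Schreier} reads
\[
N_{L/K(y)}(x_0 + x_1 a) \;=\; x_0^2 + x_0 x_1 + y x_1^2,
\]
since $-1 = 1$. Specialising $y$ to the chosen element of $K_\omega$ and interpreting $L_\omega$ as the \'etale quadratic algebra $K_\omega[a]/(a^2 - a - y)$ (which is a field extension when $y \notin \wp(K_\omega)$ and $K_\omega \times K_\omega$ otherwise), the defining equation \eqref{eqn:conic_equation} of $C_y$ becomes
\[
N_{L_\omega/K_\omega}(x_0 + x_1 a) \;=\; t x_2^2.
\]

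For the forward direction, take a point $(x_0:x_1:x_2) \in C_y(K_\omega)$. If $x_2 \neq 0$, then $t = N_{L_\omega/K_\omega}\bigl((x_0 + x_1 a)/x_2\bigr)$ and there is nothing more to do. If $x_2 = 0$, then $x_0 + x_1 a$ is a nontrivial zero of the norm form, which forces $L_\omega$ to be split since the norm form of a quadratic field extension is anisotropic; in this case every element of $K_\omega^\times$ is a norm, in particular $t$. For the converse, if $t = N_{L_\omega/K_\omega}(z)$ for some $z \in L_\omega^\times$, write $z = x_0 + x_1 a$ with $x_0, x_1 \in K_\omega$ and take $x_2 = 1$ to obtain a point $(x_0:x_1:1) \in C_y(K_\omega)$.

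No step is a serious obstacle: the only mild subtlety is the case $x_2 = 0$, where one must handle the split algebra $L_\omega \cong K_\omega \times K_\omega$ separately and note that isotropy of the norm form characterises precisely this case. Everything else is a direct translation between the two descriptions.
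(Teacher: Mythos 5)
Your proof is correct and follows essentially the same route as the paper: identify the left side of the conic equation with the Artin--Schreier norm form (using $-1 = 1$), deal with the $x_2 = 0$ case by observing that an isotropic norm form forces $L_\omega$ to split, and invoke Lemma~\ref{prop_Serre}(1). The paper compresses all this into a single sentence, so your write-up is simply a fuller version of the intended argument.
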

\begin{proof}
	First note that if there is a solution with $x_2 = 0$ then there is a solution with $x_2 \neq 0$,
	since a smooth conic over an infinite field with a rational point has a Zariski dense set
	of rational points. The result is then immediate from Lemma \ref{prop_Serre} and the 
	equation of the conic \eqref{eqn:conic_equation},
	noting that $1 = -1 \in K$.
\end{proof}
	
\subsection{Good reduction}\label{Rational_Points_on_C}
We next consider the case of good reduction. The places $t$ and $t^{-1}$ behave differently than the other places. We deal with them separately later.
\begin{lemma}\label{lemma:points_on_conic_nonnegative_valuation}
Let $\omega$ be a place of $K$ with $\omega\neq t,t^{-1}$ and $y \in \calO_\omega$. Then the conic $C_y$ has a $K_\omega$-point.
\end{lemma}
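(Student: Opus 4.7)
The plan is to reduce the statement to showing $t$ is a norm from the Artin--Schreier extension $L_\omega/K_\omega$, and then to exploit the fact that $t$ is a unit at $\omega$ together with unramifiedness of $L_\omega$.

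First I would apply \Cref{equivalence_rtl_points_local_symbol} to reduce the problem: $C_y(K_\omega) \neq \emptyset$ holds if and only if $t$ is a norm from the étale algebra $L_\omega = K_\omega[a]/(a^2 - a - y)$.

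Next I would observe that, because $\omega \notin \{t, t^{-1}\}$, the element $t$ has valuation zero at $\omega$, so $t \in \mathcal{O}_\omega^\times$. Hence the conclusion will follow from part (4) of \Cref{prop_Serre} provided that $L_\omega/K_\omega$ is unramified (including the split étale case where the statement is trivial).

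To check unramifiedness, I would use that $y \in \mathcal{O}_\omega$. Then the polynomial $a^2 - a - y$ has coefficients in $\mathcal{O}_\omega$ and separable reduction modulo $\omega$ (its derivative is $-1$, a unit). Thus $\mathcal{O}_\omega[a]/(a^2 - a - y)$ is a finite étale $\mathcal{O}_\omega$-algebra; depending on whether $a^2 - a - \bar y$ is irreducible or split over the residue field $\mathbb{F}_{2^{\deg \omega}}$, it defines either the unramified quadratic extension or a split extension. In either case $L_\omega/K_\omega$ is unramified, so $t \in \mathcal{O}_\omega^\times$ is a norm by \Cref{prop_Serre}(4). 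Equivalently, $[y,t)_\omega = 0$, and \Cref{equivalence_rtl_points_local_symbol} concludes the proof.

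There is no real obstacle here: the argument is essentially a one-line application of the classical fact that Artin--Schreier extensions defined by integral parameters are unramified in characteristic $p$, combined with the observation that $t$ is a unit away from $\{t, t^{-1}\}$. The only point requiring mild care is to include the case when $a^2 - a - y$ is reducible by interpreting $L_\omega$ as an étale algebra, as noted after \Cref{prop_Serre}.
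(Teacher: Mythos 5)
Your proof is correct, but it takes a genuinely different route from the paper's. The paper argues geometrically: it computes the Jacobian of $C_y \bmod \omega$, checks that the only candidate singular point would force $x_2 = 0$ (impossible since $t$ is a unit), concludes $C_y$ has good reduction at $\omega$, and then produces an $\FF_\omega$-point by Chevalley--Warning and lifts it via Hensel's Lemma. You instead go through the symbol machinery: reduce via \Cref{equivalence_rtl_points_local_symbol} to showing $t$ is a norm from $L_\omega$, observe $L_\omega/K_\omega$ is unramified because $a^2-a-y$ has separable reduction when $y \in \O_\omega$, and invoke \Cref{prop_Serre}(4) since $t \in \O_\omega^\times$. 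Both arguments are short and correct, and both use only tools the paper has already set up. Your version has the advantage of staying within the symbol framework that the rest of Section 3 uses, so it hangs together a bit more uniformly with Lemmas \ref{lemma:points_on_conic_omega_negative}--\ref{lemma:points_on_conic_place_t_inv}; the paper's version avoids passing through the Artin--Schreier algebra and is arguably more self-contained for a reader who hasn't internalized the norm characterization. You were right to flag the reducible case and the convention of treating $L_\omega$ as an étale algebra — that is exactly the subtlety the paper addresses in the remark after \Cref{prop_Serre}.
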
	
\begin{proof}
The Jacobian matrix of $C_y \bmod \omega$ is given by $(x_1, x_0, 0).$
	The only potential singular point is $x_0 = x_1 = 0$. This implies $tx_2^2 = 0$ on $C_y$ over $\FF_{\omega}$. As $t$ is a unit, this yields $x_2 = 0$. Hence, there is no singular point on $C_y$ over $\FF_{\omega}$. So $C_y$ has good reduction over $K_\omega$. The Chevalley--Warning Theorem now implies that there is a smooth $\FF_\omega$-point, which thus lifts to a $K_\omega$-point by Hensel's Lemma.
\end{proof}

\subsection{Bad reduction}
We are now interested in the fibres $\pi^{-1}(y)$, which reduce modulo $\omega$ to the point at infinity, that is $y$ with negative $\omega$-adic valuation. We begin with the following symbol computation.

\begin{lemma}\label{lemma:points_on_conic_omega_negative}
	Let $\omega\in\Omega_K$ with $\omega\neq t,t^{-1}$ and $n\in\NN$. We view
	$\omega$ as a monic irreducible polynomial. Then 
	$[\omega^{-n},t)_\omega = 1.$
\end{lemma}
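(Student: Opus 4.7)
The plan is to apply the reciprocity relation \eqref{eq:reciprocity_condition_on_sum_symbols} to the pair $(a,b) = (\omega^{-n},t)$, yielding
\[\sum_{\omega' \in \Omega_K} [\omega^{-n},t)_{\omega'} = 0 \in \ZZ/2\ZZ,\]
and to evaluate every term on the left except at $\omega' = \omega$. I expect all but one of them to vanish, with exactly $[\omega^{-n},t)_t = 1$, which will force $[\omega^{-n},t)_\omega = 1$.

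For every place $\omega' \notin \{t, t^{-1}, \omega\}$, the element $\omega^{-n}$ is a unit in $\calO_{\omega'}$, so $X^2 - X - \omega^{-n}$ has coefficients in $\calO_{\omega'}$ and is separable (its derivative $1$ is nonzero in characteristic $2$). Hensel's lemma then forces the associated Artin--Schreier extension of $K_{\omega'}$ to be either trivial or the unramified quadratic extension. Since $\omega' \neq t, t^{-1}$, the element $t$ is a unit in $\calO_{\omega'}$ and hence a norm by \Cref{prop_Serre}(4), so $[\omega^{-n},t)_{\omega'} = 0$ by \Cref{prop_Serre}(1).

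At $\omega' = t^{-1}$, the polynomial $\omega$ has a pole of order $\deg\omega \ge 1$ at infinity, so $\omega^{-n}$ lies in the maximal ideal of $\calO_{t^{-1}}$. I would produce an Artin--Schreier root directly by iteration: setting $\alpha_0 = 0$ and $\alpha_{k+1} = \alpha_k^2 + \omega^{-n}$, the characteristic $2$ identity $\alpha_{k+1} - \alpha_k = (\alpha_k - \alpha_{k-1})^2$ doubles the $t^{-1}$-adic valuation of successive differences, so the sequence converges in $\calO_{t^{-1}}$ to $\alpha$ with $\alpha^2 - \alpha = \omega^{-n}$. Thus the Artin--Schreier extension is trivial and $[\omega^{-n},t)_{t^{-1}} = 0$.

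The decisive computation is at $\omega' = t$. Since $\omega$ is irreducible and distinct from $t$, one has $\omega(0) = 1 \in \FF_2$, hence $\omega^{-n} \equiv 1 \pmod{t}$, and $X^2 - X - 1$ is irreducible over $\FF_2$. Therefore the extension $L/K_t$ defined by $X^2 - X - \omega^{-n}$ is the nontrivial unramified quadratic extension of $K_t$. Since $L/K_t$ is unramified, $t$ remains a uniformizer of $L$, and every element of $L^\times$ has the form $u\,t^k$ with $u \in \calO_L^\times$; then $N_{L/K_t}(u\,t^k) = N_{L/K_t}(u) \cdot t^{2k}$ has even $t$-adic valuation. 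As $v_t(t) = 1$ is odd, $t$ is not a norm, so $[\omega^{-n},t)_t = 1$. Substituting these four evaluations into the reciprocity relation yields the claim. The main obstacle is the analysis at $t^{-1}$: there $\omega^{-n}$ lies in the maximal ideal, and the required Artin--Schreier root must be produced by the iterative construction above rather than a single Hensel lift.
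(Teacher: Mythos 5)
Your proof is correct, and the overall strategy is the same as the paper's: exploit the reciprocity law \eqref{eq:reciprocity_condition_on_sum_symbols} to reduce the computation at $\omega$ to the computations at the remaining places, observe that the symbol vanishes at all $v\neq \omega,t,t^{-1}$ by the unramified-extension/unit-norm argument, and then determine the symbols at $t$ and $t^{-1}$. Where you diverge is in how those last two symbols are pinned down. The paper goes through the conic: at $t$ it derives a contradiction from a putative $K_t$-point on $C_{\omega^{-n}}$ by reducing a suitably scaled version of the conic equation modulo $t$; at $t^{-1}$ it exhibits a smooth point on the reduced conic and lifts it by Hensel. You instead argue purely at the level of the Artin--Schreier algebra: at $t$ you note $\omega^{-n}\equiv 1\pmod t$ so the extension is the unramified quadratic one, and then use the valuation-parity description of the norm group to see $t$ is not a norm; at $t^{-1}$ you note $v_{t^{-1}}(\omega^{-n})>0$, so the Artin--Schreier polynomial splits by Hensel and the algebra is trivial, making everything a norm. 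Both routes are valid. Yours is cleaner and more symbol-theoretic, avoiding explicit manipulation of the conic equation; the paper's is more hands-on and parallels the geometric lifting argument it uses elsewhere. One small remark: your caution that ``a single Hensel lift'' would not suffice at $t^{-1}$ is unnecessary --- the polynomial $X^2-X-\omega^{-n}$ has derivative a unit and value in the maximal ideal at $X=0$, so the standard Hensel lemma applies directly; your iteration is exactly the usual proof of that lemma.
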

\begin{proof}
	A direct computation of this symbol seems to be hard, so we calculate all symbols $[\omega^{-n},t)_v$ for $v\neq \omega$ and then use (\ref{eq:reciprocity_condition_on_sum_symbols}).
	
	Let $v$ be a place of $K$. The relevant Artin-Schreier extension is $L_{n,v} := K_v[x]/(x^2 - x - \omega^{-n})$. First assume that $v\neq \omega,t,t^{-1}$. Then $L_{n,v}$ is unramified at $v$ and $t$ is a unit, hence $[\omega^{-n},t)_v = 0$ by \Cref{prop_Serre}.
	Next, consider the place $t$. Here we shall prove that
	\begin{equation} \label{eqn:symbol_t}
	[\omega^{-n},t)_t = 1.
	\end{equation}
	Assume for a contradiction that $C_{\omega^{-n}}(K_t)\neq\emptyset$. Consequently, there are $x_0,x_1,x_2\in\ringofintegerst$ not all divisible by $t$ such that $x_0^2+x_0x_1+\omega^{-n}x_1^2 = tx_2^2$. 
	By substituting $x_1$ by $\omega^n x_1$, we get 
	\begin{equation}\label{eq:case_t_proof_rational_points_for_negative_valuation_y}
		x_0^2+\omega^nx_0x_1 + \omega^nx_1^2 = tx_2^2.
	\end{equation}
	As $\omega\neq t$ is an irreducible polynomial in $\FF_2[t]$, we can write \[\omega = a_0+a_1t+a_2t^2+\dots +t^m\] for an $m\in\NN$, $a_i\in\FF_2$ with $a_0 = 1$. It follows, $\omega\equiv\omega^n \equiv 1\bmod t$. Considering (\ref{eq:case_t_proof_rational_points_for_negative_valuation_y}) modulo $t$ gives
	\[x_0^2 + x_0x_1 + x_1^2 \equiv 0\bmod t.\]
	This equation has only the trivial solution $x_0\equiv x_1\equiv 0\bmod t$. But then
	\[x_2^2t = t^2(x_0^2+\omega^nx_0x_1+\omega^nx_1^2)\]
	and $x_0^2+\omega^nx_0x_1+\omega^nx_1^2\in \FF_2[t]$, which implies $t \mid x_2$. This contradicts that $t$ does not divide every $x_i$.  Thus, $C_{\omega^{-n}}(K_t) = \emptyset$ and \Cref{equivalence_rtl_points_local_symbol} yields $[\omega^{-n},t)_t\neq 0$, hence $[\omega^{-n},t)_t = 1$.
			
	Lastly, consider the infinite place $t^{-1}$. In equation \eqref{eqn:conic_equation}
	we make the change of variables $x_2 \mapsto t^{-1}x_2$ to obtain
	\begin{equation}\label{eq:case_t-1_proof_rational_points_for_negative_valuation_y}
		x_0^2 + x_0x_1 + \omega^{-n}x_1^2 = t^{-1}x_2^2.
	\end{equation}
	As $\omega$ is a monic polynomial of positive degree we have $v_{t^{-1}}(\omega^{-n}) > 0$.
	We consider (\ref{eq:case_t-1_proof_rational_points_for_negative_valuation_y}) modulo $t^{-1}$ and obtain
	\[x_0^2 + x_0x_1 \equiv 0\bmod t^{-1}.\]
	This equation has the non-singular solution $x_0\equiv x_1\equiv 1\bmod t^{-1}$ that we can lift to $\ringofintegerstinv$ by Hensel's Lemma. Hence $C_{\omega^{-n}}(K_{t^{-1}})\neq \emptyset$ and \Cref{equivalence_rtl_points_local_symbol} 
	yields $[\omega^{-n},t)_{t^{-1}} = 0$.
			
	By combining these results and using \eqref{eq:reciprocity_condition_on_sum_symbols}, we get 
	$[\omega^{-n},t)_\omega = -[\omega^{-n},t)_t = 1$.
\end{proof}

We use this lemma to determine the measure of the set of conics with local point.

			
\begin{lemma}\label{lemma:measure_set_conic_with_points}
	Let $\omega$ be a place of $K$ with $\omega\neq t,t^{-1}$. For $k\in\NN$, consider the two sets
	\begin{align*}
	\mathcal{A}_{0,k} &= \left\{y\in K_\omega: v_\omega(y) = -k, ~C_{y}(K_\omega)\neq \emptyset\right\},  \\
		\mathcal{A}_{1,k} &= \left\{y\in K_\omega: v_\omega(y) = -k, ~C_{y}(K_\omega) = \emptyset\right\}.
	\end{align*}
	Let $\mu_\omega$ denote the Haar measure on $K_\omega$. For all $k>1$, we have
	\begin{equation*}
		\mu_\omega\left(\mathcal{A}_{0,k}\right) = \mu_\omega\left(\mathcal{A}_{1,k}\right) = \tfrac{1}{2}\cdot\mu_\omega\left(\left\{y\in K_\omega: v_\omega(y) = -k\right\}\right)
		=  \tfrac{1}{2}\cdot2^{k\deg\omega}\left(1-2^{-\deg\omega}\right).
	\end{equation*}
	For $k=1$, we have
	\[\mu_\omega(\mathcal{A}_{0,1}) = 2^{\deg\omega-1} -1.\]
\end{lemma}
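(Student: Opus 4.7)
The strategy is to encode local solubility as the vanishing of an $\FF_2$-linear functional on the finite-dimensional space of principal parts $\omega^{-k}\calO_\omega/\calO_\omega$, and then read off the measures from the size of its kernel. By \Cref{equivalence_rtl_points_local_symbol}, the condition $C_y(K_\omega)\neq\emptyset$ is equivalent to $[y,t)_\omega = 0$, and combining \Cref{lemma:points_on_conic_nonnegative_valuation} with \Cref{equivalence_rtl_points_local_symbol} shows that $[y,t)_\omega = 0$ for every $y\in\calO_\omega$. The additivity in the first argument (\Cref{prop_Serre}\,(2)) then produces a well-defined $\FF_2$-linear map
$$\Phi_k\colon \omega^{-k}\calO_\omega/\calO_\omega\longrightarrow \FF_2,\qquad y+\calO_\omega\mapsto [y,t)_\omega.$$

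Next, I would observe that $\Phi_k$ is surjective because $\omega^{-k}$ is sent to $[\omega^{-k},t)_\omega = 1$ by \Cref{lemma:points_on_conic_omega_negative}. Since $\omega^{-k}\calO_\omega/\calO_\omega$ is an $\FF_2$-vector space of dimension $k\deg\omega$, this forces $|\ker\Phi_k| = 2^{k\deg\omega-1}$. Writing $S_k=\{y\in\omega^{-k}\calO_\omega: [y,t)_\omega=0\}$, this set is the preimage of $\ker\Phi_k$ under the projection $\omega^{-k}\calO_\omega\to\omega^{-k}\calO_\omega/\calO_\omega$; each fibre is a translate of $\calO_\omega$ of measure $1$ by our normalisation, so $\mu_\omega(S_k)=2^{k\deg\omega-1}$.

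Finally, I would conclude by taking differences along the filtration. For $k\ge 2$ we have $\mathcal{A}_{1,k}=S_k\setminus S_{k-1}$ with both measures supplied by the formula, so
$$\mu_\omega(\mathcal{A}_{1,k})=2^{k\deg\omega-1}-2^{(k-1)\deg\omega-1}=\tfrac{1}{2}\cdot 2^{k\deg\omega}(1-2^{-\deg\omega}),$$
and the complementary $\mu_\omega(\mathcal{A}_{0,k})$ is obtained by subtracting from \eqref{eqn:mu_valuation}. For $k=1$ the set $S_0$ equals all of $\calO_\omega$ with measure $1$ (which does not match the formula $2^{k\deg\omega-1}$ extrapolated to $k=0$), so the same subtraction yields $\mu_\omega(\mathcal{A}_{1,1})=2^{\deg\omega-1}-1$. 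The only substantive input is the surjectivity of $\Phi_k$, which rests entirely on \Cref{lemma:points_on_conic_omega_negative}; the remainder is formal linear algebra together with the compatibility of Haar measure with the filtration.
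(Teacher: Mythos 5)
Your proof is correct, and it takes a genuinely different route from the paper. The paper defines the translation map $g\colon y\mapsto y+\omega^{-1}$, uses $[y+\omega^{-1},t)_\omega=[y,t)_\omega+1$ (from \Cref{lemma:points_on_conic_omega_negative} and bilinearity) to conclude that $g$ swaps $\mathcal A_{1,k}$ and $\mathcal A_{0,k}$ for $k>1$, and then handles $k=1$ separately because $g$ does not preserve the stratum $\{v_\omega=-1\}$: the exceptional subset $\omega^{-1}+\calO_\omega$ of measure $1$ escapes into $\calO_\omega$, and the paper accounts for it by hand. Your approach instead packages the same inputs (additivity of the symbol, vanishing on $\calO_\omega$ via \Cref{lemma:points_on_conic_nonnegative_valuation}, and $[\omega^{-n},t)_\omega=1$ from \Cref{lemma:points_on_conic_omega_negative}) into the statement that $y\mapsto[y,t)_\omega$ descends to a surjective $\FF_2$-linear functional $\Phi_k$ on $\omega^{-k}\calO_\omega/\calO_\omega\cong\FF_2^{k\deg\omega}$, so $\mu_\omega(S_k)=|\ker\Phi_k|=2^{k\deg\omega-1}$, and the stratum measures fall out by taking successive differences. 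The advantage of your version is that it treats all $k$ uniformly: the ``anomaly'' at $k=1$ is transparently the fact that $\mu_\omega(S_0)=\mu_\omega(\calO_\omega)=1$ rather than the extrapolated $2^{-1}$, with no ad hoc exceptional-set bookkeeping. It also yields the cumulative measures $\mu_\omega(S_k)$ directly, which the paper's argument does not. Both proofs ultimately rest on the same two symbol computations, so neither is stronger in input, but yours is arguably the cleaner packaging.
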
	
\begin{proof}
	Let $y\in K_\omega$ with $v_\omega(y_\omega) < 0$. Write $y = u\omega^{-k}$ for 
	$u\in \ringofintegersw^\times$. Bilinearity of the local symbol from \Cref{prop_Serre} together with \Cref{lemma:points_on_conic_omega_negative} gives
	\begin{equation}\label{eq_g_switches}
	[y + \omega^{-1},t)_\omega = [y,t)_\omega + [\omega^{-1},t)_\omega = [y,t)_\omega + 1.
	\end{equation}
	Define the map
	\begin{align*}
		g:  K_\omega \rightarrow  K_\omega, \quad y \mapsto y+\omega^{-1}.
	\end{align*}
	Clearly, $g$ is injective. Further, it is measure preserving since the Haar measure is translation invariant. Moreover, as  $[y,t)_\omega$ is either 1 or 0, formula \eqref{eq_g_switches} implies that the map $g$ switches the elements whose conic has a point and those who do not. It follows immediately
	that $\mu_\omega(\mathcal{A}_{0,k}) = \mu_\omega(\mathcal{A}_{1,k})$ for $k > 1$, since $g$
	also preserves valuations in these sets. The calculation of the measure for $k> 1$
	then follows from \eqref{eqn:mu_valuation}.
	When $k = 1$, define for $i\in\lbrace 0,1\rbrace$ the sets
	\begin{equation*}
		\Omega_{i} = \left\{y\in\mathcal{A}_{i,1}: v_\omega(y+\omega^{-1}) \neq -1\right\}
		= \left\{y\in\mathcal{A}_{i,1}: y  \in \omega^{-1} + \ringofintegersw\right\}.
	\end{equation*}
	We have $\Omega_0 = \emptyset$; indeed for $x \in \ringofintegersw$ we have
	\[[\omega^{-1} + x,t)_\omega = [\omega^{-1},t)_\omega + [x,t)_\omega = 1,\] due to \Cref{lemma:points_on_conic_nonnegative_valuation,lemma:points_on_conic_omega_negative}, so the emptiness follows from \Cref{equivalence_rtl_points_local_symbol}.
	The same argument yields $\Omega_1 = \omega^{-1} + \ringofintegersw$.
	We obtain
	\[g(\mathcal{A}_{1,1}\setminus\Omega_1) = \mathcal{A}_{0,1}\setminus\Omega_0 = \mathcal{A}_{0,1},\]
	which implies
	\[\mu_\omega(\mathcal{A}_{0,1}) = \mu_\omega(\mathcal{A}_{1,1}) - \mu_\omega(\Omega_1)
	= \mu_\omega(\mathcal{A}_{1,1}) - 1.\]
	By \eqref{eqn:mu_valuation} we have
	\[\mu_\omega(\mathcal{A}_{1,1}) + \mu_\omega(\mathcal{A}_{0,1}) = \mu_\omega(\{y\in K_\omega: v_\omega(y) = -1\}) =  2^{\deg\omega} - 1.\]
	Thus taking these together we deduce that
	$$\mu_\omega(\mathcal{A}_{0,1}) = -\mu_\omega(\mathcal{A}_{0,1}) + 2^{\deg \omega} - 1
	= -\mu_\omega(\mathcal{A}_{0,1}) + 2^{\deg \omega} -2.$$
	Rearranging gives the statement of the lemma.
\end{proof}

\subsection{The places $t$ and $t^{-1}$}
It remains to determine local solubility over $K_t$ and $K_{t^{-1}}$.
				
\begin{lemma}\label{lemma:points_on_conic_place_t}
Write $y\in K_t$ as $y = \sum_{i\ge N}a_it^i$ for $N\in\ZZ$ and coefficients $a_i\in\FF_2$. We set $a_i = 0$ for all $i < N$. Then, $C_y$ has a $K_t$-point if and only if $a_0 = 0$.
\end{lemma}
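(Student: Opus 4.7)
The plan is to invoke \Cref{equivalence_rtl_points_local_symbol} to reduce the statement to the assertion that $[y,t)_t = 0$ if and only if $a_0 = 0$. I will split $y = y_- + a_0 + y_+$ with $y_- = \sum_{N \le i \le -1} a_i t^i$ (a Laurent polynomial) and $y_+ = \sum_{i \ge 1} a_i t^i \in t\calO_t$, and then use bilinearity (\Cref{prop_Serre}(2)) to write
\[ [y,t)_t = [y_-,t)_t + [a_0,t)_t + [y_+,t)_t. \]

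The tail $y_+$ will contribute nothing: since $v_t(y_+) \ge 1$, Hensel's lemma produces a root of $a^2 - a - y_+$ in $\calO_t$, so the Artin--Schreier algebra splits as $K_t \times K_t$ and every element of $K_t^\times$ is a norm, hence $[y_+,t)_t = 0$ by \Cref{prop_Serre}(1). For the Laurent-polynomial part, bilinearity reduces the claim $[y_-,t)_t = 0$ to showing $[t^{-n},t)_t = 0$ for every $n \ge 1$, which I will establish by global reciprocity \eqref{eq:reciprocity_condition_on_sum_symbols}. At places $\omega \ne t, t^{-1}$ the element $t^{-n}$ lies in $\calO_\omega$, so \Cref{lemma:points_on_conic_nonnegative_valuation} together with \Cref{equivalence_rtl_points_local_symbol} gives $[t^{-n},t)_\omega = 0$. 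At $\omega = t^{-1}$, the substitution $x_2 \mapsto t^{-1}x_2$ in the defining equation of $C$ yields $x_0^2 + x_0 x_1 + t^{-n} x_1^2 = t^{-1} x_2^2$, whose reduction modulo $t^{-1}$ (noting $v_{t^{-1}}(t^{-n}) = n > 0$) admits the non-singular solution $x_0 = x_1 = 1$, $x_2 = 0$, which lifts by Hensel; so $[t^{-n},t)_{t^{-1}} = 0$ and reciprocity then forces $[t^{-n},t)_t = 0$.

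It remains to compute $[a_0,t)_t$. The case $a_0 = 0$ is trivial. For $a_0 = 1$, the polynomial $a^2 - a - 1$ has no root in the residue field $\FF_2$, so $M_t := K_t[a]/(a^2-a-1)$ is the unramified quadratic extension of $K_t$. Norms from an unramified degree-$f$ extension have valuation divisible by $f$, but $v_t(t) = 1$ is odd, so $t$ is not a norm from $M_t$, and \Cref{prop_Serre}(1) gives $[1,t)_t = 1$. Combining, $[y,t)_t = [a_0,t)_t$, which vanishes precisely when $a_0 = 0$. The most delicate step is identifying $[1,t)_t$ with the non-trivial element of $\ZZ/2\ZZ$, since this is the only contribution which actually records ramified behaviour of $t$; the rest of the argument follows mechanically from the preceding lemmas.
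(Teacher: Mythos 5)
Your proof is correct, but it takes a genuinely different route from the paper's. The paper disposes of this lemma in a single line by invoking Serre's residue formula \cite[Ch.~XIV \S5 Cor.~to Prop.~15]{serre79}, which gives $[y,t)_t = \Res_t\bigl(y\,\tfrac{\mathrm{d}t}{t}\bigr) = a_0$ directly. You instead decompose $y = y_- + a_0 + y_+$ and use bilinearity (\Cref{prop_Serre}(2)) to evaluate the three pieces separately: the tail $y_+ \in t\calO_t$ is killed by Hensel (the Artin--Schreier algebra splits, so every element is a norm); the principal part $y_-$ is killed monomial-by-monomial via global reciprocity \eqref{eq:reciprocity_condition_on_sum_symbols}, checking at each auxiliary place that the relevant conic is soluble (using \Cref{lemma:points_on_conic_nonnegative_valuation} away from $t, t^{-1}$ and a direct Hensel argument at $t^{-1}$); and the constant $a_0 = 1$ contributes the non-trivial symbol because $K_t[a]/(a^2-a-1)$ is the unramified quadratic extension and $t$ has odd valuation. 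This is considerably longer than the paper's argument, but it is more self-contained (it avoids citing the residue formula for the wild symbol $[\,\cdot\,,\cdot\,)$), it isolates exactly where the ramification of $t$ at the place $t$ enters (only through the unit coefficient $a_0$), and the reciprocity-plus-Hensel gambit for the principal part mirrors the technique the paper itself uses in \Cref{lemma:points_on_conic_omega_negative}, so it fits the paper's toolkit even though this particular lemma is proved differently there. Each step is sound, including the norm-valuation divisibility fact for unramified extensions and the non-singularity check for the Hensel lift at $t^{-1}$.
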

\begin{proof}
	By the residue formula \cite[Ch.~XIV §5 Cor.~to Prop.~15]{serre79}, we get
	\[ [y,t)_t = \Res_t\left( y\frac{\mathrm{d}t}{t}\right) = a_0. \]
	Combining this with \Cref{equivalence_rtl_points_local_symbol}, the lemma follows.
\end{proof}
				
\begin{lemma}\label{lemma:points_on_conic_place_t_inv}
	Write $y\in K_{t^{-1}}$ as $y = \sum_{i\le N}a_it^i$ for some $N\in\ZZ$ and coefficients $a_i\in\FF_2$. Set $a_i = 0$ for all $i > N$. Then, $C_y$ has a $K_{t^{-1}}$-point if and only if $a_0 = 0$.
\end{lemma}
\begin{proof}
	Analogous to Lemma \ref{lemma:points_on_conic_place_t}.
\end{proof}
				

\section{Harmonic analysis} \label{sec:harmonic_analysis}
\subsection{Poisson summation}
We now begin in earnest to analyse the height zeta function \eqref{def_height_zeta_function} using Fourier analysis (we keep $K = \FF_2(t)$). The volume of $\ringofintegersw$ with respect to the Haar measure $\mathrm{d}x_\omega$ is equal to 1 for all $\omega\in\Omega_K$. We have an induced Haar measure $\mathrm{d}x$ on the ring of adeles $\Adele_K$ of $K$ which satisfies $\vol(\Adele_K/K) = q^{g-1} = 1/2$ by \cite[Cor.~to Thm.~VI.1]{Wei95}. Let $\psi:\Adele_K \to \CC^\times$ be an additive adelic (unitary) character. Such a character is uniquely determined by a collection of local characters $\psi_\omega: K_\omega \to \CC^{\times}$ that are trivial on $\O_\omega$ for all but finitely many $\omega$. We call $\psi$ \emph{automorphic} if it is trivial on the image of $K$.

We have the local heights $H_\omega(y_\omega) = \max\lbrace 1,\absvalue{y_\omega}_\omega \rbrace$; these give rise to an adelic height $H:= \prod_\omega H_\omega: \Adele_K \to \QQ$. For $s\in\CC$, we denote by $H(s;\cdot):= H(\cdot)^s$ and let 
\begin{align*}
	\widehat{H}(s;\psi) = \int_{\Adele_K}H(s;x)^{-1}\psi(x)\mathrm{d}x, \quad \Real(s) \gg 1, 
\end{align*}
be the Fourier transform of $H(s;\cdot)^{-1}$ on $\Adele_K$ at the character $\psi$.

As $H$ is a product of local heights, we have a product
\[\widehat{H}(s;\psi) = \prod_{\omega\in\Omega_K}\widehat{H}_\omega(s;\psi_{\omega}) = \prod_{\omega\in\Omega_K} \int_{K_\omega}H_\omega(s;x_\omega)^{-1}\psi(x_\omega)\mathrm{d}x_\omega.\]
of local Fourier transforms. We define the indicator function $f\colon K\rightarrow\{0,1\}$ by 
\begin{equation} \label{def:f}
f(y) = \begin{cases}
	0, &\text{if}~C_y(K) = \emptyset, \\
	1, &\text{if}~C_y(K) \neq \emptyset.
\end{cases}
\end{equation}
This allows us to rewrite the height zeta function as
\[Z(s) = \sum_{y\in K}f(y)H(s;y)^{-1}.\]
The Hasse principle \cite[Ch.~VI, 3.1]{lam05} shows that we can write $f = \prod_{\omega\in\Omega_K}f_\omega$ with $f_\omega$ defined analogously on $K_\omega$ . 
We analogously define the Fourier transform of $f(\cdot)H(s;\cdot)^{-1}$ on $\Adele_K$ at the character $\psi$ by
\begin{equation} \label{def:Fourier_transform}
\widehat{fH}(s;\psi) = \int_{\Adele_K}f(x)H(s;x)^{-1}\psi(x)\mathrm{d}x, \quad \Real(s) \gg 1.
\end{equation}
Again, we can write $\widehat{fH}(s;\psi)$ as a product of local Fourier transforms
\[\widehat{fH}(s;\psi) = \prod_{\omega\in\Omega_K} \widehat{f_\omega H_\omega}(s;\psi_{\omega}) = \prod_{\omega\in\Omega_K}\int_{K_\omega} f(x_\omega)H(s;x_\omega)^{-1}\psi_{\omega}(x_\omega)\mathrm{d}x_\omega,\]
as $f$ can be written as a product over all $f_\omega$. By applying Poisson summation as stated in \cite[Thm.~3.35]{bourqui_zeta11} plus \cite[Cor.~3.36]{bourqui_zeta11} with $\Omega = \prod_{\omega\in\Omega_K} \calO_{K_\omega}$, we can rewrite the height zeta function as follows
\begin{equation} \label{eq:rewritten_height_zeta}
	Z(s)= 2 \sum_{\psi} \widehat{fH}(s;\psi), \quad \Real(s) \gg 1,
\end{equation}
where the sum is over all additive automorphic characters of $K$ and we use 
$1/\vol(\Adele_K/K) = 2$.

\subsection{Fourier transforms at the trivial character}\label{subsection:Fourier_transform_trivial_character}
For completeness we start with the local Fourier transform of $H$ at the trivial character $\one$. 
				
\begin{lemma}\label{lemma:local_fourier_transform_trivial_character}
	For every place $\omega\in\Omega_K$  we have
	\[\widehat{H_\omega}(s,\one) = \frac{1-2^{-s\deg \omega}}{1-2^{-(s-1)\deg \omega}}, \quad
	\Real(s) > 1.\]
	In particular $\widehat{H}(s,\one) = \zeta_K(s-1)\zeta_K(s)^{-1}$ for $\Real(s) > 2$.
\end{lemma}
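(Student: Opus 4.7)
The plan is to compute $\widehat{H_\omega}(s,\one)$ directly by stratifying $K_\omega$ according to $\omega$-adic valuation, using the measure formula~\eqref{eqn:mu_valuation}, and then sum the resulting geometric series. The global claim will follow from the Euler product definition of $\zeta_K$.

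First I would write out the definition
\[
\widehat{H_\omega}(s,\one) = \int_{K_\omega} \max\{1,|y|_\omega\}^{-s}\, dy_\omega,
\]
and decompose $K_\omega$ as the disjoint union of $\mathcal{O}_\omega = \{v_\omega(y) \geq 0\}$ and the level sets $\{v_\omega(y) = -k\}$ for $k \geq 1$. On $\mathcal{O}_\omega$ the integrand is $1$ and the measure equals $1$. On $\{v_\omega(y) = -k\}$ the integrand equals $2^{-ks\deg\omega}$, and by~\eqref{eqn:mu_valuation} the measure of this set is $2^{k\deg\omega}(1 - 2^{-\deg\omega})$.

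Writing $d = \deg\omega$, this yields
\[
\widehat{H_\omega}(s,\one) = 1 + (1 - 2^{-d})\sum_{k=1}^{\infty} 2^{-k(s-1)d},
\]
and the geometric series converges absolutely for $\Real(s) > 1$ to $2^{-(s-1)d}/(1 - 2^{-(s-1)d})$. Combining and simplifying the resulting rational expression over the common denominator $1 - 2^{-(s-1)d}$ gives exactly $(1 - 2^{-sd})/(1 - 2^{-(s-1)d})$, as claimed. The only step requiring any attention is checking this algebraic simplification, which is straightforward.

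For the global statement, I would take the product of local Fourier transforms (justified by the factorisation of $H$ as a product of local heights, together with the absolute convergence of the local formulas for $\Real(s) > 2$), giving
\[
\widehat{H}(s,\one) = \prod_{\omega \in \Omega_K} \frac{1 - 2^{-s\deg\omega}}{1 - 2^{-(s-1)\deg\omega}} = \frac{\zeta_K(s-1)}{\zeta_K(s)},
\]
where the last equality follows from the Euler product of $\zeta_K$ recalled in \Cref{lemma:value_zeta_function}. There is no real obstacle in this lemma; the main content is simply the clean bookkeeping of the Euler factor, which will be reused when analysing the non-trivial characters.
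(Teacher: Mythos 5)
Your proof is correct and takes essentially the same approach as the paper: decompose $K_\omega$ by $\omega$-adic valuation, apply the measure formula \eqref{eqn:mu_valuation} on each level set, sum the geometric series, and deduce the global statement from the Euler product and \Cref{lemma:value_zeta_function}.
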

\begin{proof}
	Since $\one$ and the height function $H$ are $1$ on $\ringofintegersw$, we obtain 
	\begin{equation*}
		\widehat{H_\omega}(s,\one) = \int_{y_\omega\in\ringofintegersw}\mathrm{d}y_\omega + \int_{\substack{y_\omega\in K_\omega\\v_\omega(y) < 0}} H(s;y_\omega)^{-1}\mathrm{d}y_\omega
		= 1 + \sum_{k=1}^{\infty} 2^{-sk\deg \omega}\int_{\substack{y_\omega\in K_\omega\\v_\omega(y) = -k}}\mathrm{d}y_\omega.
	\end{equation*}
	By \eqref{eqn:mu_valuation} this infinite sum is
	\begin{align*}
		\left(1 - 2^{-\deg \omega}\right)\sum_{k=1}^{\infty} 2^{-k(s-1)\deg \omega}
		= \frac{1 - 2^{-\deg \omega}}{2^{(s-1)\deg \omega}-1}.
	\end{align*}
	Combining we obtain
	\begin{equation*}
		\widehat{H_\omega}(s,\one) 
		= \frac{2^{(s-1)\deg \omega} - 2^{-\deg \omega}}{2^{(s-1)\deg \omega}-1} 
		= \frac{1-2^{-s\deg \omega}}{1-2^{-(s-1)\deg \omega}}.
	\end{equation*}
	The last part follows from Lemma \ref{lemma:value_zeta_function}.
\end{proof}

\begin{lemma}\label{lemma:fourier_transform_trivial_f_omega}
	Let $\omega\in\Omega_K\setminus\{t,t^{-1}\}$. We have
	\[\widehat{f_\omega H_\omega}(s;\one) = 
		1 + \frac{1}{2}
		\left(\frac{1}{2^{(s-1)\deg\omega}}\left(1 + 
		\frac{1-2^{-\deg\omega}}{2^{(s-1)\deg \omega}-1}\right) 
		- \frac{1}{2^{s\deg\omega -1}} \right), \quad \Real(s) > 1.\]
	\[\widehat{f_\omega H_\omega}(s;\one) = 
		1 + \frac{1}{2}\cdot \frac{1}{2^{(s-1)\deg\omega}}
		\left(1 - \frac{1}{2^{\deg\omega -1}} + 
		\frac{1-2^{-\deg\omega}}{2^{(s-1)\deg \omega}(1 - 2^{-(s-1)\deg \omega})}\right), 
		\quad \Real(s) > 1.\]		
\end{lemma}
\begin{proof}
	Applying \Cref{lemma:points_on_conic_nonnegative_valuation,lemma:measure_set_conic_with_points}
	and using a similar strategy to the proof of 
	Lemma~\ref{lemma:local_fourier_transform_trivial_character}, we obtain
	\begin{equation*}
		\widehat{f_\omega H_\omega}(s;\one) = 
		1 + 2^{-s\deg \omega}(2^{\deg\omega-1} -1) + 
		\frac{1}{2}
		\sum_{k=2}^{\infty} 2^{-sk\deg \omega}\cdot2^{k\deg\omega}\left(1-2^{-\deg\omega}\right).
	\end{equation*}	
	This infinite sum equals
	$$\frac{\left(1-2^{-\deg\omega}\right)}{2}
		\sum_{k=2}^{\infty} 2^{-k(s-1)\deg \omega}
	=\frac{\left(1-2^{-\deg\omega}\right)}{2} \cdot 
	\frac{1}{2^{(s-1)\deg \omega}(2^{(s-1)\deg \omega}-1)}.$$
	Combining these gives the statement.
\end{proof}

\begin{lemma}\label{lemma:fourier_transform_trivial_f_t_and_t_inv}
	Let $\omega = t$ or $\omega = t^{-1}$. Then
	\[\widehat{f_\omega H_\omega}(s;\one) 
	= \frac{1}{2}\cdot \frac{1-2^{-s}}{1-2^{(1-s)}}, \quad \Real(s) > 1.\]
\end{lemma}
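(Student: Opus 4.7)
The plan is to invoke Lemmas~\ref{lemma:points_on_conic_place_t} and~\ref{lemma:points_on_conic_place_t_inv}, which describe local solubility at both $\omega=t$ and $\omega=t^{-1}$ in a uniform way: $f_\omega(y)=1$ if and only if the constant coefficient $a_0$ of the Laurent expansion of $y$ vanishes. This reduces the Fourier transform to a Haar-measure calculation stratified by the $\omega$-adic valuation. Note that $\deg t = \deg t^{-1} = 1$, so $H_\omega(y) = 2^{-v_\omega(y)}$ whenever $v_\omega(y) \le 0$, and $H_\omega(y) = 1$ otherwise.

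Decompose $K_\omega = \ringofintegersw \sqcup \bigsqcup_{k\ge 1}\{y : v_\omega(y) = -k\}$. First, for $y\in \ringofintegersw$ we have $H_\omega(s;y)^{-1}=1$, and $\{y\in \ringofintegersw : a_0=0\}$ is the maximal ideal, of $\mu_\omega$-measure $1/2$. Next, for $k\ge 1$ the coefficient $a_0$ is an $\FF_2$-digit appearing in the Laurent expansion of $y$ independently of the (nonzero) leading coefficient $a_{-k}$ (respectively $a_k$ for $\omega = t^{-1}$). Hence the condition $a_0 = 0$ halves the measure $\mu_\omega(\{v_\omega(y)=-k\}) = 2^{k-1}$ provided by~\eqref{eqn:mu_valuation}, giving $2^{k-2}$. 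Putting the pieces together yields
\begin{equation*}
\widehat{f_\omega H_\omega}(s;\one) = \frac{1}{2} + \sum_{k=1}^{\infty} 2^{-ks}\cdot 2^{k-2} = \frac{1}{2} + \frac{1}{4}\cdot\frac{2^{1-s}}{1-2^{1-s}}.
\end{equation*}
A routine algebraic simplification---bringing to the common denominator $1-2^{1-s}$ and cancelling---reduces the right-hand side to $\tfrac{1}{2}\cdot(1-2^{-s})/(1-2^{1-s})$, as required.

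The proof is essentially identical in the two cases, so one can simply record the computation once. The only step deserving care is the independence claim used in the middle paragraph, which is standard: on the stratum $\{v_\omega(y)=-k\}$ the leading coefficient is forced to be $1$, while the remaining Laurent digits (including $a_0$) are independent free coordinates in $\FF_2$, each contributing an independent factor of $1/2$ under the conditioning $a_0=0$. I do not anticipate any genuine obstacle; this lemma is the tame analogue of Lemma~\ref{lemma:local_fourier_transform_trivial_character}, with the extra factor $1/2$ arising from the single local condition cut out by Serre's symbol at $t$ and $t^{-1}$.
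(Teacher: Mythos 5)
Your computation is correct, but it takes a different route from the paper's. You stratify by $\omega$-adic valuation and evaluate the resulting geometric series directly (after arguing that the digit $a_0$ is a free coordinate independent of the leading digit on each stratum), which is the same strategy the paper uses for Lemma~\ref{lemma:local_fourier_transform_trivial_character}. The paper's proof of this particular lemma is shorter and avoids all that: it observes that the translation $y\mapsto y+1$ is measure- and height-preserving, and that Lemmas~\ref{lemma:points_on_conic_place_t} and~\ref{lemma:points_on_conic_place_t_inv} give $f_\omega(y) + f_\omega(y+1)=1$ (translation by $1$ flips the digit $a_0$), whence $\widehat{f_\omega H_\omega}(s;\one)=\tfrac12\widehat{H_\omega}(s;\one)$ and the formula drops out of Lemma~\ref{lemma:local_fourier_transform_trivial_character} with no further computation. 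Both arguments are valid; yours makes the mechanism explicit at the cost of redoing the geometric-series bookkeeping, while the paper's packages the entire effect of the condition $a_0=0$ into a single symmetry, which is also the cleaner way to see where the overall factor $1/2$ comes from. Your remark that the independence of $a_0$ from the leading-digit constraint is the only step needing care is well placed; that is exactly the content that the symmetry argument sidesteps.
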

\begin{proof}
	The map $y \mapsto y + 1$ is both measure and height preserving.
	Moreover by Lemmas~\ref{lemma:points_on_conic_place_t} and \ref{lemma:points_on_conic_place_t_inv}
	we have $f_\omega(y) + f_{\omega}(y + 1) = 1$. Thus
	$\widehat{f_\omega H_\omega}(s;\one) = \frac{1}{2} \widehat{H}_\omega(s;\one)$, and so the result
	follows from Lemma \ref{lemma:local_fourier_transform_trivial_character}.
\end{proof}
				
\begin{lemma}\label{Global_Fourier_Transform_at_trivial_character}
	The Fourier transform $\widehat{fH}(s;\one)$ equals
	\begin{align*}
		 \left(\frac{1}{2} \left( \frac{1-2^{-s}}{1-2^{1-s}} \right)\right)^2
		\prod_{\omega\in\Omega_K\setminus\{t,t^{-1}\}}
		\left(	1 +  \frac{1}{2}
		\left(\frac{1}{2^{(s-1)\deg\omega}}\left(1 + 
		\frac{1-2^{-\deg\omega}}{2^{(s-1)\deg \omega}-1}\right) 
		- \frac{1}{2^{s\deg\omega -1}} \right)\right)
	\end{align*}
	with the Euler product being absolutely convergent for $\Real(s) > 2$.
	
	Moreover $\widehat{fH}(s;\one)\zeta_K(s-1)^{-\frac 12}$ is given
	by an Euler product which is absolutely convergent and non-zero on $\Real(s) > 3/2$.
\end{lemma}
\begin{proof}
	The calculation of the Fourier transform follows from
	Lemmas \ref{lemma:fourier_transform_trivial_f_omega} and
	\ref{lemma:fourier_transform_trivial_f_t_and_t_inv}.	
	
	For the second part, for any place $\omega \neq t,t^{-1}$ we rewrite the local Fourier transform as 
    	\begin{equation} \label{eqn:rewrite}
    	\widehat{f_\omega H_\omega}(s;\one) = 
		1 + \frac{1}{2}\cdot \frac{1}{2^{(s-1)\deg\omega}}
		\left(1 - \frac{1}{2^{\deg\omega -1}} + 
		\frac{1-2^{-\deg\omega}}{2^{(s-1)\deg \omega}(1 - 2^{-(s-1)\deg \omega})}\right).
	\end{equation}
	We note that for $\Real(s) > 3/2$ we have
	\begin{equation} \label{eqn:bound_trivial}
	\left|\frac{1-2^{-\deg\omega}}{1 - 2^{-(s-1)\deg \omega}}\right|
	<1+2^{-\deg\omega/2}.
	\end{equation}
	For $\Real(s) > 3/2$ we obtain
	$$\widehat{f_\omega H_\omega}(s;\one) = 1 + \frac{1}{2} \cdot \frac{1}{2^{(s-1)\deg \omega}}
		+ O\left(\frac{1}{2^{(2(\Real s-1) \deg \omega}} + \frac{1}{2^{\Real(s) \deg \omega} }\right).$$
	We deduce that
	$$\left(1-2^{-s\deg \omega}\right)
	\widehat{f_\omega H_\omega}(s;\one)^2 = 1 + 
	 O\left(\frac{1}{2^{2\Real(s-1) \deg \omega}} + \frac{1}{2^{\Real(s) \deg \omega} }\right).$$
	Recalling the description of $\zeta_K(s)$ from 
	Lemma \ref{lemma:value_zeta_function}, we find that
	$\widehat{fH}(s;\one)^2\zeta_K(s-1)$ is given
	by an absolutely convergent Euler product on $\Real(s) > 3/2$.
	To complete the proof it suffices to show that it is non-zero in this domain,
	since this allows us to take square roots. 
	From absolute convergence, it suffices to show that 
    $\widehat{f_\omega H_\omega}(s;\one)$ is non-zero for $\Real(s) > 3/2$.
    For $\omega \in {t,t^{-1}}$ this is clear. From \eqref{eqn:bound_trivial},
    for $\Real s > 3/2$ we find that
	\begin{align*}
		&\left|\frac{1}{2}\cdot \frac{1}{2^{(s-1)\deg\omega}}
		\left(1 - \frac{1}{2^{\deg\omega -1}} + 
		\frac{1-2^{-\deg\omega}}{2^{(s-1)\deg \omega}(1 - 2^{-(s-1)\deg \omega})}\right)\right| \\
		&< \frac{1}{2}\cdot \frac{1}{2^{\deg\omega/2}}
		\left(1 - \frac{1}{2^{\deg\omega -1}} + 1 + \frac{1}{2^{\deg \omega/2}}\right)\\
		& < \frac{1}{2}\cdot \frac{1}{2^{1/2}}
		\left(2 + \frac{1}{2^{1/2}}\right) < 1,
	\end{align*}
	from which non-vanishing easily follows from \eqref{eqn:rewrite}.
\end{proof}

\subsection{Fourier transforms at the non-trivial characters}
In this subsection, we compute the Fourier transforms at the non-trivial characters. A special role will be played by the character
\begin{equation} \label{def:psi_t}
	\psi_{1/t}: \Adele_K \to \CC^\times, \quad (y_\omega) \mapsto \prod_\omega \exp(\pi i [y_\omega,t)_\omega),	
\end{equation}
where the local symbol is as in Section \ref{sec:preliminaries}. This is an additive adelic character by Lemma~\ref{prop_Serre}, and automorphic by \eqref{eq:reciprocity_condition_on_sum_symbols}. It is non-trivial by Lemma \ref{lemma:points_on_conic_omega_negative}.
We prove that the global Fourier transform vanishes for every non-trivial automorphic character, except $\psi_{1/t}$ where the Fourier transform equals the Fourier transform of the trivial character. 
		
\begin{lemma}\label{lemma_global_Fourier_vanishes_nontrivial_a_neq_t_inv}
Consider the compact subgroup \[\calO_C = \prod_{\omega_\in\Omega_K\setminus\{t,t^{-1}\}}\ringofintegersw\times t\ringofintegerst\times t^{-1}\ringofintegerstinv \quad \subseteq \Adele_K.\]
The height function $H$ and the indicator function $f$ are invariant under the additive action of $\calO_C$. The only additive adelic automorphic characters that are trivial on $\O_{C}$ are the trivial character $\one$ and $\psi_{1/t}$.
\end{lemma}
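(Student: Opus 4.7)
My plan is to verify the two invariance claims directly from the local analysis already established, and then reduce the character classification to a computation of the order of $\Adele_K/(K+\O_C)$.

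For the invariance of $H$: each local height $H_\omega(y) = \max(1,|y|_\omega)$ is invariant under translation by $\O_\omega$ (for $\omega\notin\{t,t^{-1}\}$) or by the maximal ideal $t\O_t$, $t^{-1}\O_{t^{-1}}$ at the two special places, because the ultrametric inequality gives $|y+z|_\omega = |y|_\omega$ whenever $|z|_\omega < \max(1,|y|_\omega)$, while both heights equal $1$ if $|y|_\omega\leq 1$. For the invariance of $f$, I will use Proposition~\ref{equivalence_rtl_points_local_symbol} and the additivity of the local symbol in its first argument (Lemma~\ref{prop_Serre}(2)), which reduces the claim to $[z_\omega,t)_\omega = 0$ for $z_\omega$ in the $\omega$-component of $\O_C$. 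For $\omega\notin\{t,t^{-1}\}$ this is immediate from Lemma~\ref{lemma:points_on_conic_nonnegative_valuation}, while for $\omega\in\{t,t^{-1}\}$ elements of $t\O_t$ or $t^{-1}\O_{t^{-1}}$ have vanishing constant coefficient, so the vanishing follows from Lemmas~\ref{lemma:points_on_conic_place_t} and \ref{lemma:points_on_conic_place_t_inv}.

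These same local symbol computations instantly show that $\psi_{1/t}$ is trivial on $\O_C$, since each factor $\exp(\pi i [z_\omega,t)_\omega)$ in the definition \eqref{def:psi_t} is $1$ on the corresponding piece of $\O_C$. Combined with the fact that $\one$ and $\psi_{1/t}$ are distinct (the latter is non-trivial, by the preceding sentence in the text), the classification reduces to proving that there are at most two characters of $\Adele_K$ trivial on both $K$ and $\O_C$; equivalently, that the compact group $\Adele_K/(K+\O_C)$ has order exactly two (by Pontryagin duality for finite abelian groups).

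To compute this order I plan to apply Riemann--Roch on $\PP^1_{\FF_2}$. Setting $D = t + t^{-1}$, a divisor of degree $2$, the subgroup $\O_C$ is precisely the adelic realisation of $\calO_{\PP^1}(-D)$, and the standard identification of sheaf cohomology with adelic quotients gives
\[
\Adele_K/(K+\O_C) \;\cong\; H^1\bigl(\PP^1_{\FF_2},\calO(-D)\bigr).
\]
The space $H^0(\calO(-D))$ consists of rational functions with zeros at $t=0$ and at $\infty$ and no poles elsewhere; any such function has more zeros than poles on $\PP^1$, so is $0$. Riemann--Roch in genus $0$ then gives
\[
h^1(\calO(-D)) \;=\; h^0(\calO(-D)) - \deg(-D) - 1 \;=\; 0 - (-2) - 1 \;=\; 1,
\]
so $|\Adele_K/(K+\O_C)| = |\FF_2| = 2$, as desired.

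The main obstacle here is really just the quotient computation. If one prefers to avoid invoking Riemann--Roch, it can be replaced by a direct partial-fractions reduction: using partial fractions, any adele can be adjusted modulo $K$ to be integral at every place $\omega\neq t,t^{-1}$; then subtracting an element $\sum_{k\geq 1} a_{-k}/t^k \in K$ reduces the $t$-component to a constant $a_0\in\FF_2$ modulo $t\O_t$, and subtracting a polynomial in $t\FF_2[t]\subset K$ reduces the $t^{-1}$-component to a constant $c_0\in\FF_2$ modulo $t^{-1}\O_{t^{-1}}$; the only remaining freedom is subtracting a constant $b\in\FF_2\subset K$, which shifts $(a_0,c_0)$ diagonally, leaving a quotient of order two.
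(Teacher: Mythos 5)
Your proof is correct. The invariance claims for $H$ and $f$ are established exactly as the paper does, via the ultrametric inequality and the lemmas on local solubility together with additivity of the local symbol, and you correctly note that these same computations show $\psi_{1/t}$ is trivial on $\O_C$.

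Where you genuinely diverge from the paper is in the character classification. The paper cites \cite[Lem.~IV.2.4]{Wei95} to get $K + \prod_\omega \O_\omega = \Adele_K$, observes that $[\prod_\omega \O_\omega : \O_C] = 4$ so $[\Adele_K : K + \O_C] \le 4$, and then notes that $1 \in K$ has non-trivial image in $\prod_\omega \O_\omega / \O_C \cong \FF_2 \times \FF_2$, cutting the bound to $\le 2$; combined with the existence of the two distinct characters $\one$ and $\psi_{1/t}$, this forces equality. You instead compute $|\Adele_K/(K+\O_C)|$ exactly: either via the adelic description of sheaf cohomology and Riemann--Roch on $\PP^1_{\FF_2}$, giving $h^1(\calO(-D)) = 1$ for the degree-$2$ divisor $D = (t) + (t^{-1})$, or via a direct partial-fractions reduction. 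Both versions are correct; the partial-fractions argument is essentially a self-contained reproof of what the paper obtains from Weil's lemma plus index counting, while the Riemann--Roch route is more machinery than strictly needed but makes it conceptually transparent why the answer is $q^{h^1} = 2$. A small point of contrast: the paper only ever proves the upper bound $\le 2$ and needs the non-triviality of $\psi_{1/t}$ to conclude equality, whereas your argument determines the order of the quotient outright, so you only use $\psi_{1/t}$ to \emph{identify} the non-trivial character, not to prove one exists.
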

\begin{proof}
	It is easy to see from the definition of the height function that $H_\omega$ is invariant under $\prod_\omega \O_\omega$, hence under $\O_C$.	\Cref{lemma:points_on_conic_nonnegative_valuation,lemma:points_on_conic_place_t,lemma:points_on_conic_place_t_inv} show that $f$ is 1 on $\calO_C$. Then, \Cref{equivalence_rtl_points_local_symbol} together with \Cref{prop_Serre}\,(2) yields 
\[[y_\omega+x_\omega,t)_\omega = [y_\omega,t)_\omega + [x_\omega,t)_\omega = [y_\omega,t)_\omega\]
for all $\omega\in\Omega_K$, $y = (y_\omega)_\omega\in \Adele_K$ and $x = (x_\omega)_\omega\in\calO_C$. Hence, by using again \Cref{equivalence_rtl_points_local_symbol}, we get
\[f(y+x) = \prod_{\omega\in\Omega_K}f_\omega(y_\omega+x_\omega) = \prod_{\omega\in\Omega_K}f_\omega(y_\omega) = f(y).\]
Thus $f$ is invariant under the action of $\calO_C$.

Recall \cite[Lem.~IV.2.4]{Wei95} that $K$ and $\prod_{\omega} \O_\omega$ together generate $\Adele_K$. As $\O_C \subseteq \prod_{\omega} \O_\omega$ has index $4$, it follows that the subgroup of $\Adele_K$ generated by $K$ and $\O_C$ has index at most $4$ in $\Adele_K$. But $1$ has trivial image in the quotient,
whence one sees that the index is actually at most $2$. However $\one$ and $\psi_{1/t}$ are both trivial on $\O_C$, by 	\Cref{lemma:points_on_conic_nonnegative_valuation,lemma:points_on_conic_place_t,lemma:points_on_conic_place_t_inv}. The result now easily follows.
\end{proof}
				
\begin{lemma}\label{lemma:global_Fourier_tranform_vanishes_nontrivial_and_neq_infty}
	Let $\psi$ be an additive adelic automorphic character. Then 
	$$
	\widehat{fH}(s,\psi)  = 
	\begin{cases}
		\widehat{fH}(s,\one), \quad & \text{if }\psi \in \{\one, \psi_{1/t}\}, \\
		0, \quad & \text{otherwise.}
	\end{cases}
	$$
\end{lemma}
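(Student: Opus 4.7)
The plan is to exploit the $\calO_C$-invariance of $fH$ established in Lemma \ref{lemma_global_Fourier_vanishes_nontrivial_a_neq_t_inv} together with a pointwise identification of $\psi_{1/t}$ with $1$ on the support of $f$.

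First I would handle the vanishing for $\psi \notin \{\one, \psi_{1/t}\}$. For any $u \in \calO_C$, translation invariance of the Haar measure together with the fact, shown in Lemma \ref{lemma_global_Fourier_vanishes_nontrivial_a_neq_t_inv}, that $f(x+u)H(x+u) = f(x)H(x)$, gives
\begin{equation*}
\widehat{fH}(s,\psi) = \int_{\Adele_K}f(x+u)H(s;x+u)^{-1}\psi(x+u)\,\mathrm{d}x = \psi(u)\,\widehat{fH}(s,\psi).
\end{equation*}
Hence $(1-\psi(u))\widehat{fH}(s,\psi)=0$ for every $u \in \calO_C$. If $\psi$ is non-trivial on $\calO_C$, one may choose $u$ with $\psi(u) \neq 1$, forcing $\widehat{fH}(s,\psi)=0$. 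Lemma \ref{lemma_global_Fourier_vanishes_nontrivial_a_neq_t_inv} identifies the only automorphic characters trivial on $\calO_C$ as $\one$ and $\psi_{1/t}$, which handles the ``otherwise'' case.

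It remains to verify $\widehat{fH}(s,\psi_{1/t}) = \widehat{fH}(s,\one)$. The observation is that $\psi_{1/t}$ equals $1$ on the support of $f$. Indeed, writing everything as a product of local factors, $f(x) \neq 0$ forces $f_\omega(x_\omega) = 1$ for every $\omega$, and then Proposition \ref{equivalence_rtl_points_local_symbol} gives $[x_\omega,t)_\omega = 0$ for every $\omega$. Thus each local factor $\exp(\pi i [x_\omega,t)_\omega)$ of $\psi_{1/t}(x)$ equals $1$, so $f(x)\psi_{1/t}(x) = f(x)$ for all $x \in \Adele_K$. Substituting into the definition of the Fourier transform gives the claimed equality.

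The only delicate point is the non-triviality statement of $\psi_{1/t}$ and the index-$2$ computation, but these are already covered by Lemma \ref{lemma_global_Fourier_vanishes_nontrivial_a_neq_t_inv} and Lemma \ref{lemma:points_on_conic_omega_negative}; conceptually the main obstacle — computing the exact group of characters trivial on $\calO_C$ — has already been done. What remains here is essentially the formal deduction above.
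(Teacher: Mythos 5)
Your proof is correct and follows essentially the same approach as the paper: both rely on the $\calO_C$-invariance of $fH$ (together with the classification of automorphic characters trivial on $\calO_C$ from Lemma~\ref{lemma_global_Fourier_vanishes_nontrivial_a_neq_t_inv}) to kill the Fourier transform at characters non-trivial on $\calO_C$, and both handle $\psi_{1/t}$ by observing it is identically $1$ on the support of $f$. The only cosmetic difference is that the paper establishes the vanishing by decomposing the integral into $\calO_C$-cosets and invoking orthogonality of characters, whereas you use the equivalent one-step translation argument $\widehat{fH}(s,\psi)=\psi(u)\widehat{fH}(s,\psi)$; you also spell out the $\psi_{1/t}$ step via Proposition~\ref{equivalence_rtl_points_local_symbol}, which the paper leaves implicit.
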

\begin{proof}
	First assume that $\psi$ is non-trivial on $\O_C$. 
	Due to \Cref{lemma_global_Fourier_vanishes_nontrivial_a_neq_t_inv} we know that
	$H$ and $f$ are invariant under $\calO_C$. It follows that
	\begin{align*}
		\widehat{fH}(s,\psi) 
		&= \int_{\Adele_K} H(y)^{-s}f(y)\psi(y)\mathrm{d}y\\
		&= \sum_{y\in\Adele_K/\calO_C} \int_{\calO_C}H(y+x)^{-s}f(y+x)\psi(y+x)\mathrm{d}x\\
		&= \sum_{y\in\Adele_K/\calO_C} H(y)^{-s}f(y)\psi(y) \int_{\calO_C} \psi(x)\mathrm{d}x.
	\end{align*}
	However the integral of $\psi$ over $\O_c$ equals $0$ by character orthogonality,
	as $\O_C$ is compact and $\psi$ is non-trivial on $\O_C$.

	By \Cref{lemma_global_Fourier_vanishes_nontrivial_a_neq_t_inv}  it suffices
	to consider the case $\psi = \psi_{1/t}$. However by \eqref{def:psi_t}, Proposition~\ref{equivalence_rtl_points_local_symbol} and the
	definition \eqref{def:f} of $f$, we have $f(y)\psi_{1/t}(y) = f(y)$ for all
	$y \in \Adele_K$. It then follows from the definition \eqref{def:Fourier_transform}
	that $\widehat{fH}(s,\psi_{1/t}) = \widehat{fH}(s,\one)$.
\end{proof}

\subsection{The height zeta function}
We now explain the consequences of the above analysis for the height zeta function \eqref{def_height_zeta_function}. In particular
we show that the height zeta function has an Euler product expansion, which is  not obvious from the definition. 
				
\begin{theorem}\label{thm:height_zeta_function}
	We have $Z(s) = 4\cdot \widehat{fH}(s;\one)$.
	Moreover $Z(s)\zeta_K(s-1)^{-\frac 12}$ is given
	by an Euler product which is absolutely convergent and non-zero on $\Real(s) > 3/2$.
	Hence $Z(s)$ admits a holomorphic continuation to the region
	$\Real(s) > 3/2$ except for branch points of order $1/2$ at $s = 2 + 2 \pi i n/\log 2$
	for $n \in \ZZ$.
\end{theorem}
\begin{proof}
	The first equality follows by Poisson summation  \eqref{eq:rewritten_height_zeta} and 
	Lemma \ref{lemma:global_Fourier_tranform_vanishes_nontrivial_and_neq_infty}.
	The rest is Lemma \ref{Global_Fourier_Transform_at_trivial_character}.
\end{proof}

\subsection{Proof of \Cref{main_theorem_rtl_points}}\label{section:Proof_of_main_theorem}
Note that
\[Z(s) = \sum_{M = 0}^\infty \frac{N(\AAA^1_K,\pi,2^M)}{q^{Ms}}\]
using the notation from \eqref{def:N(B)}, so we are exactly in the setting of \eqref{Dirichlet_series_F}.
In the light of the analytic properties proved in Theorem \ref{thm:height_zeta_function}, we can use  Theorem \ref{Tauberian_Theorem}  to give an asymptotic formula for $N(\AAA^1_K,\pi,2^M)$.

Via Lemma \ref{lemma:value_zeta_function} we can write $\widetilde{Z}(s) := Z(s)(1 - 2^{2-s})^{1/2} = Z(s)\zeta_K(s-1)^{-1/2}(1 - 2^{1-s})^{-1/2}$. Thus applying Theorem \ref{Tauberian_Theorem} with $a = 2$, $b = 1/2$, and $B = 2^M$ yields
$$
N(\AAA^1_K,\pi,B) = \frac{cB^2}{(\log B)^{1/2}} + O\left(\frac{B^2}{(\log B)^{3/2}}\right),$$
where
$$c =\frac{4(\log 2)^{1/2}}{\Gamma(1/2)(1 - 2^{-1})^{1/2}}\prod_{\omega}\left(1-\frac{1}{2^{\deg \omega}}\right)^{1/2} c_\omega,$$
and $c_\omega = \widehat{f_\omega H_\omega}(2;\one)$. It follows immediately from Lemma \ref{Global_Fourier_Transform_at_trivial_character}  that the $c_\omega$ agree with the factors in Theorem \ref{main_theorem_rtl_points}.
Using $\Gamma(1/2) = \sqrt{\pi}$ gives the statement of Theorem \ref{main_theorem_rtl_points}. \qed
	

\bibliographystyle{alpha}

\begin{thebibliography}{FMT89}

\bibitem[BBL16]{bright_browning_loughran}
M.~J. Bright, T.~D. Browning, and D.~Loughran.
\newblock Failures of weak approximation in families.
\newblock {\em Compos. Math.}, 152(7):1435--1475, 2016.

\bibitem[BD09]{browning_dietmann}
T.~D. Browning and R.~Dietmann.
\newblock Solubility of {F}ermat equations.
\newblock In {\em Quadratic forms---algebra, arithmetic, and geometry}, volume
  493 of {\em Contemp. Math.}, pages 99--106. Amer. Math. Soc., Providence, RI,
  2009.

\bibitem[BLS]{BLS}
T.~Browning, J.~Lyczak, and A.~Smeets.
\newblock Paucity of rational points on fibrations with multiple fibres.
\newblock {\em arXiv:2310.01135}.

\bibitem[Bou11]{bourqui_zeta11}
D.~Bourqui.
\newblock Fonction z\^{e}ta des hauteurs des vari\'{e}t\'{e}s toriques non
  d\'{e}ploy\'{e}es.
\newblock {\em Mem. Amer. Math. Soc.}, 211(994):viii+151, 2011.

\bibitem[BT95]{batyrev_tschinkel_95}
V.~V. Batyrev and Y.~Tschinkel.
\newblock Rational points of bounded height on compactifications of anisotropic
  tori.
\newblock {\em Internat. Math. Res. Notices}, (12):591--635, 1995.

\bibitem[BT96]{batyrev_tschinkel_96}
V.~Batyrev and Y.~Tschinkel.
\newblock Height zeta functions of toric varieties.
\newblock volume~82, pages 3220--3239. 1996.
\newblock Algebraic geometry, 5.

\bibitem[BT98]{batyrev1998toric}
V.~V. Batyrev and Y.~Tschinkel.
\newblock Manin's conjecture for toric varieties.
\newblock {\em J. Algebraic Geom.}, 7(1):15--53, 1998.

\bibitem[CLT00]{chambert-loir_tschinkel_2000}
A.~Chambert-Loir and Y.~Tschinkel.
\newblock Points of bounded height on equivariant compactifications of vector
  groups. {I}.
\newblock {\em Compositio Math.}, 124(1):65--93, 2000.

\bibitem[CLT02]{chambert-loir_tschinkel_2002}
A.~Chambert-Loir and Yu. Tschinkel.
\newblock On the distribution of points of bounded height on equivariant
  compactifications of vector groups.
\newblock {\em Invent. Math.}, 148(2):421--452, 2002.

\bibitem[CTS21]{Brauer}
J.-L. Colliot-Th\'{e}l\`ene and A.~N. Skorobogatov.
\newblock {\em The {B}rauer-{G}rothendieck group}, volume~71 of {\em Ergebnisse
  der Mathematik und ihrer Grenzgebiete. 3. Folge. A Series of Modern Surveys
  in Mathematics}.
\newblock Springer, Cham, 2021.

\bibitem[Coo88]{Coombes}
K. R. Coombes. \newblock Every rational surface is separably split. \textit{Comment. Math. Helv.} 63, no. 2, 305--311, 1988.

\bibitem[Dat23]{datta23}
A.~Datta.
\newblock On the distribution of polynomials having a given number of
  irreducible factors over finite fields.
\newblock {\em Res. Number Theory}, 9(1):Paper No. 13, 24, 2023.

\bibitem[Del54]{Del54}
H.~Delange.
\newblock G\'{e}n\'{e}ralisation du th\'{e}or\`eme de {I}kehara.
\newblock {\em Ann. Sci. \'{E}cole Norm. Sup. (3)}, 71:213--242, 1954.


\bibitem[FMT89]{fmt89}
J.~Franke, Y.~I. Manin, and Y.~Tschinkel.
\newblock Rational points of bounded height on {F}ano varieties.
\newblock {\em Invent. Math.}, 95(2):421--435, 1989.

\bibitem[Guo95]{guo_95}
C.~R. Guo.
\newblock On solvability of ternary quadratic forms.
\newblock {\em Proc. London Math. Soc. (3)}, 70(2):241--263, 1995.

\bibitem[Hoo93]{hooley_93}
C.~Hooley.
\newblock On ternary quadratic forms that represent zero.
\newblock {\em Glasgow Math. J.}, 35(1):13--23, 1993.

\bibitem[Hoo07]{hooley_07}
C.~Hooley.
\newblock On ternary quadratic forms that represent zero. {II}.
\newblock {\em J. Reine Angew. Math.}, 602:179--225, 2007.

\bibitem[Lam05]{lam05}
T.~Y. Lam.
\newblock {\em Introduction to quadratic forms over fields}, volume~67 of {\em
  Graduate Studies in Mathematics}.
\newblock American Mathematical Society, Providence, RI, 2005.

\bibitem[Lou18]{loughran_18}
D.~Loughran.
\newblock The number of varieties in a family which contain a rational point.
\newblock {\em J. Eur. Math. Soc.}, 20(10):2539--2588, 2018.

\bibitem[LRS]{loughran_rome_sofos}
D.~Loughran, N.~Rome, and E.~Sofos.
\newblock The leading constant for rational points in families.
\newblock {\em arXiv:2210.13559}.

\bibitem[LS16]{loughran_smeets}
D.~Loughran and A.~Smeets.
\newblock Fibrations with few rational points.
\newblock {\em Geom. Funct. Anal.}, 26(5):1449--1482, 2016.

\bibitem[Odl95]{Odl95}
A.~M.~Odlyzko. Asymptotic enumeration methods. \textit{Handbook of combinatorics}, Vol. 1, 2, 1063--1229, Elsevier Sci. B. V., Amsterdam, 1995.

\bibitem[Pey12]{pey12}
E.~Peyre.
\newblock Points de hauteur born\'{e}e sur les vari\'{e}t\'{e}s de drapeaux en
  caract\'{e}ristique finie.
\newblock {\em Acta Arith.}, 152(2):185--216, 2012.

\bibitem[Por]{porritt2020}
S.~Porritt.
\newblock Character sums over products of prime polynomials.
\newblock {\em arXiv:2003.12002}.

\bibitem[PV04]{poonen_voloch}
B.~Poonen and J.~F. Voloch.
\newblock Random {D}iophantine equations.
\newblock In {\em Arithmetic of higher-dimensional algebraic varieties ({P}alo
  {A}lto, {CA}, 2002)}, volume 226 of {\em Progr. Math.}, pages 175--184.
  Birkh\"{a}user Boston, Boston, MA, 2004.
\newblock With appendices by J.-L. Colliot-Th\'{e}l\`ene and N. M. Katz.

\bibitem[Ros02]{Rosen}
M.~Rosen.
\newblock {\em Number theory in function fields}, volume 210 of {\em Graduate
  Texts in Mathematics}.
\newblock Springer-Verlag, New York, 2002.

\bibitem[Ser79]{serre79}
J.-P. Serre.
\newblock {\em Local fields}, volume 67.
\newblock Springer-Verlag, New York-Berlin, 1979.
\newblock Translated from the French by Marvin Jay Greenberg.

\bibitem[Ser90]{serre_90}
J.-P. Serre.
\newblock Sp\'{e}cialisation des \'{e}l\'{e}ments de {${\rm Br}_2({\bf
  Q}(T_1,\cdots,T_n))$}.
\newblock {\em C. R. Acad. Sci. Paris S\'{e}r. I Math.}, 311(7):397--402, 1990.

\bibitem[Wei95]{Wei95}
A.~Weil.
\newblock {\em Basic number theory}.
\newblock Classics in Mathematics. Springer-Verlag, Berlin, 1995.
\newblock Reprint of the second (1973) edition.

\end{thebibliography}

\end{document}